\documentclass[11pt,a4paper, reqno]{amsart}
\usepackage{mathrsfs}

\usepackage{amsmath,amsfonts,verbatim}
\usepackage{latexsym}
\usepackage{amssymb,leftidx}
\usepackage{extarrows}
\usepackage{overpic}
\usepackage{color}
\usepackage{epsfig}
\usepackage{subfigure}
\usepackage{tikz}
\usepackage{hyperref}
\usepackage{enumitem}
\colorlet{RED}{red}

\hypersetup{urlcolor=blue, citecolor=red}

\usepackage{amssymb}
\usepackage{mathrsfs}
\usepackage{amscd}
\usepackage{bbm}
\usepackage{cite}

\newcommand\R{\mathbb{R}}

\newcommand{\norm}[1]{\left\lVert #1\right\rVert}

\newcommand\A{\bf A}

\usepackage{graphicx}
\usepackage{float}

\numberwithin{equation}{section}

\newtheorem{definition}{Definition}[section]
\newtheorem{lemma}{Lemma}[section]
\newtheorem{theorem}{Theorem}[section]

\begin{document}
\title[Carleman estimate and its applications in electromagnetic fields]{Carleman Estimates for Critical Electromagnetic Schr\"odinger Operators on Conic Manifolds}

\author{Baiyang Yin}
\address{Department of Mathematics, Beijing Institute of Technology, Beijing 100081; }
\email{baiyang\_yin@bit.edu.cn}

\author{Fang Zhang}
\address{School of Mathematics and Information Science, Henan Polytechnic University, Jiaozuo, Henan, China, 454003;}
\email{zhangfang@hpu.edu.cn}

\author{Junyong Zhang}
\address{Department of Mathematics, Beijing Institute of Technology, Beijing 100081; }
\email{zhang\_junyong@bit.edu.cn; }

\begin{abstract}

We establish an $L^p$--$L^q$-type Carleman estimate for the Schr\"odinger operator $H_{A,V}$ with scaling-critical electromagnetic singular potentials on a conical singular space $(X,g)$ of dimension $d\geq3$, where the metric is $g=dr^2+r^2h$ and $X=C(Y)=(0,\infty)_r\times Y$ is a product cone over the closed Riemannian manifold $(Y,h)$. More precisely, we prove
\[
\|e^{\tau\phi}u\|_{L^{2d/(d-2)}(X)}\le C\,\|e^{\tau\phi}H_{A,V}u\|_{L^{2d/(d+2)}(X)}
\]
with the quadratic weight $\phi(r)=\tfrac14(\log r)^2$, uniformly in $\tau$.
As an application of this Carleman estimate, we prove the unique continuation property for this electromagnetic Schr\"odinger operator when the cross-section is $Y=\mathbb{S}^{d-1}$.

\end{abstract}

\maketitle

\begin{center}
 \begin{minipage}{120mm}
   { \small {\bf Key Words:  Carleman estimates,  scaling critical electromagnetic potentials,   conic manifolds}
      {}
   }\\
    { \small {\bf AMS Classification:}
      { 42B37, 35Q40, 35Q41.}
      }
 \end{minipage}
 \end{center}


\tableofcontents

\section{Introduction and main results}
\label{sec:intro}
\subsection{The Background and Motivation} Let $H$ be a self-adjoint operator. The
Carleman estimates are weighted a priori inequalities of the form
\begin{equation}\label{equ:carleman}
\bigl\|e^{\tau\phi} u\bigr\|_{L^{q}} \le C \bigl\|e^{\tau\phi} H u\bigr\|_{L^p},
\end{equation}
where $\phi$ is a weight function and $\tau$ is an arbitrary parameter. A key feature is that the parameter $\tau$ may be taken arbitrarily large, provided that $\tau\ge \tau^*$. Consequently, regions where $\phi$ is large dominate the estimate and impose constraints on the behaviour of $u$ elsewhere. Such estimates were introduced by T.\,Carleman \cite{Carleman39} in 1939 to prove uniqueness of the Cauchy problem for elliptic systems in two independent variables. The underlying idea---that an exponential weight forces uniqueness to propagate across a hypersurface---has become one of the fundamental tools in partial differential equations, with applications ranging from unique continuation to control theory and inverse problems.
For applications to unique continuation, we refer to \cite{Hormander83, JK85, KRS87, KT01}; for applications of Carleman estimates to control theory, see \cite{FI96, LR95, FCG06, Lions88}, and for applications to inverse problems, see \cite{BK81, Klib92, IY98, Yam09, BY17}.

The classical $L^2$ theory was developed by A.-P.\,Calder\'on \cite{Calderon58} and L.\,H\"ormander \cite{Hormander63,Hormander83}. Calder\'on \cite{Calderon58} introduced the weight convexity condition and established uniqueness for the Cauchy problem for linear operators with non-vanishing principal symbol. H\"ormander \cite{Hormander63} reformulated the theory within the pseudodifferential and commutator framework and isolated the sharp pseudoconvexity condition guaranteeing an $L^2$ Carleman estimate. In that setting, the weight is only required to be non-characteristic in an appropriate sense; the resulting estimates are robust under lower-order perturbations, but they are generally not scaling-invariant. Most of the above results apply to operators with smooth coefficients, while Koch--Tataru \cite{KT01} derived $L^2$ Carleman estimates admitting non-smooth coefficients. $L^p$-type Carleman estimates for more general operators and their applications to unique continuation were obtained by Ionescu--Kenig \cite{IK04} and Dos Santos Ferreira \cite{DF05}; related $L^p$--$L^q$ uniform resolvent estimates for Schr\"odinger operators were further investigated by Huang--Sogge \cite{HS21}.

A second, more restrictive line of research pursues $L^p$--$L^q$ Carleman estimates tailored to operators with homogeneous critical singularities. Landmark results in this direction are the uniform Sobolev inequalities of Kenig--Ruiz--Sogge \cite{KRS87},
and the weighted estimates of Jerison--Kenig \cite{JK85} (see also Jerison \cite{Jerison86}). They proved that for all $f\in C_c^\infty(\mathbb R^d\setminus\{0\})$,
\begin{equation}
\big\||x|^{-\tau}f\big\|_{L^q(\R^d)}\le C_\delta \big\||x|^{-\tau}\Delta f\big\|_{L^p(\R^d)},
\label{eq:JK}
\end{equation}
whenever $\operatorname{dist}\bigl(\tau,\mathbb Z+\tfrac{d-2}{2}\bigr)\ge\delta>0$, with endpoint exponents $q=\frac{2d}{d-2}$, $p=\frac{2d}{d+2}$. It is worth noting that a special spectral-gap property is used in the argument. Considering $-\Delta$ in polar coordinates, the eigenvalues of the conformal Laplacian $-\Delta_{\mathbb{S}^{d-1}}+\bigl(\frac{d-2}{2}\bigr)^2$ are given by $\lambda_k^2=\bigl(k+\frac{d-2}{2}\bigr)^2$, which gives a key gap $|\lambda_{k+1}-\lambda_k|\geq 1$. We refer to Sogge \cite{sogge2017fourier} for details. 

For Schr\"odinger operators with singular magnetic and electric potentials, unique continuation for magnetic fields was first proved by Kurata \cite{Kur97}. For the scaling-critical regime $A\sim|x|^{-1}$ and $V\sim|x|^{-2}$, Arrizabalaga--Zubeldia \cite{AZ15} obtained \(L^2\)-type Carleman estimates on \(\mathbb R^d\) for operators of the form
$H_{A,V}=\nabla_A^2+V$ with 
 $\nabla_A:=\nabla+iA$,
and established unique continuation from infinity. In this paper, we study $L^p$--$L^q$-type Carleman estimates for the Schr\"odinger operator $H_{A,V}$ with scaling-critical electromagnetic singular potentials. The spectral gap property mentioned above may break down in the presence of scaling-critical singular electromagnetic potentials.

\subsection{The setup} In this paper, we consider the Schr\"odinger operator $H_{A,V}$ with scaling-critical electromagnetic singular potentials on a conical singular space $(X,g)$. By a conic manifold we mean
\begin{equation}
X=C(Y)=(0,\infty) \times Y,
\end{equation}
which is a $d$-dimensional product cone over the closed Riemannian manifold $(Y, h)$. $X$ is equipped with the metric
\begin{equation}
g=dr^2+r^2 h.
\end{equation}
We will in fact view it as its completion\footnote{One can check that this `closure' is indeed complete.} $\overline{X}=[0,\infty) \times Y$. The Euclidean space $\R^d$ is the simplest example of a cone, with the cross-section $Y=\mathbb{S}^{d-1}$ with its standard metric.
While the general product cones $X$ possess a dilation symmetry analogous to that of Euclidean space, they have no other symmetries in general.

Denoting the dual metric of $g$ by $\big(g^{jk}\big)$ and the determinant by $|g|=\mathrm{det}\big(g_{jk}\big)$, the electromagnetic Schr\"odinger operator $H_{A,V}$ on the product cone $X=C(Y)$ is given by
\begin{equation}\label{H-AV}
\begin{split}
H_{A, V}&=\frac1{\sqrt{|g|}}\sum_{j,k=1}^d\Big(i\frac{\partial}{\partial x_j}+A_j\Big)\sqrt{|g|} g^{jk} \Big(i\frac{\partial}{\partial x_k}+A_k\Big)+V(x)\\
&=-\Delta_g+2iA\cdot\nabla_g+i\mathrm{div}_g A+|A|^2+V(x)
\end{split}
\end{equation}
where $\cdot$ is the pairing of 1-forms and vector fields, $V: X\to \R$ is the electric scalar potential and 
\begin{equation} \label{eq:defn-A}
A = A_jdx^j
\end{equation}
is the 1-form representing the magnetic vector potential, and $|A|^2$ is defined by $\sum_{j,k=1}^{d}\\g^{jk}A_jA_k$. 
When we take the divergence of 1-forms, we always mean the divergence of the vector field obtained from the 1-form using the metric.

In this paper, we focus in particular on the Schr\"odinger operator of the form \eqref{H-AV}, featuring Coulomb-type magnetic potentials and inverse-square electric potentials, that is
\begin{equation} \label{eq:A-form}
A = {\A}(y), \quad V = \frac{a(y)}{r^2},\quad y \in Y,
\end{equation}
where $\A$ is a 1-form on $Y$ and the first equality is interpreted as viewing ${\A}$ as a form on $X$ that is invariant under translations in $r$.
This imposes the condition that $A$ has no radial component: $g(A,dr)=0$, which is the so-called Cronstr\"om or transversal gauge. 
We may think of $\eqref{eq:A-form}$ as requiring 
\begin{equation}
A \sim O(r^{-1}), \quad r\to 0, +\infty.
\end{equation}
This is because the components above are (implicitly) expressed in terms of the basis of 1-forms $dx^j$, which is equivalent to $dr$ and $r\,d\hat{x}$, where $\hat{x}$ denotes a coordinate system on $Y$. From \eqref{eq:A-form} and \eqref{H-AV}, we obtain
\begin{equation}\label{eq:LAa-general}
\begin{split}
H_{A, V}&=-\partial_r^2-\frac{d-1}r\partial_r+\frac1{\sqrt{|h|}}\sum_{j,k=2}^d\Big(i\frac{\partial}{\partial \hat{x}_j}+A_j\Big)\sqrt{|h|} h^{jk} \Big(i\frac{\partial}{\partial \hat{x}_k}+A_k\Big)+\frac{a(\hat{x})}{r^2}.
\end{split}
\end{equation}
Thus, in our setting the operator in \eqref{eq:LAa-general} becomes
\begin{equation}\label{eq:LAa-general'}
\begin{split}
H_{A, V}&=-\Delta_g+\frac{|{\A}(\hat{x})|^2+i\,\mathrm{div}_{h}{\A}(\hat{x})+a(\hat{x})}{r^2}+2i {\A}(\hat{x})\cdot \nabla_g\\
&=-\partial_r^2-\frac{n-1}r\partial_r+ \frac{L_{{\A},a}}{r^2},
\end{split}
\end{equation}
where $\hat{x}\in Y$, $|{\A}(\hat{x})|^2 = \sum_{i,j=1}^{d-1} h^{jk}{\A}_j{\A}_k$, and
\begin{equation}\label{L-angle}
\begin{split}
L_{{\A},a}
=-\Delta_{h}+\big(|{\A}(\hat{x})|^2+i\,\mathrm{div}_{h}{\A}(\hat{x})+a(\hat{x})\big)+2i {\A}(\hat{x})\cdot\nabla_{h}.
\end{split}
\end{equation}
In the special case of the cross-section $Y=\mathbb{S}^{d-1}$, we have
\begin{equation}\label{eq:LAa-Euclidean}
H_{A, V}=\Big(i\nabla+\frac{{\A}(\hat{x})}{|x|}\Big)^2+\frac{a(\hat{x})}{|x|^2},
\end{equation}
which is the exact Schr\"odinger operator studied in \cite{Fanelli2015, Fanelli2013, Felli2011, Fanelli2022} for dispersive and Strichartz estimates. The Euclidean Schr\"odinger operator \eqref{eq:LAa-Euclidean}, which includes the critical Coulomb-type decay potential, has garnered significant attention from both the mathematical and physical communities. For instance, the Aharonov-Bohm effect \cite{AharonovBohm1959, EhrenbergSiday1949} arises from the critical Coulomb-type decay potential, and the diffractive behavior of waves in such potentials has been investigated in \cite{Yang2021,Yang2022}. The asymptotic behavior of time-independent Schr\"odinger solutions was analyzed in \cite{Felli2011}.

 \vspace{0.2cm}

\subsection{The main results}
Let $\Delta_h$ and $\nabla_h$ denote the Laplace--Beltrami operator and the Riemannian gradient on $Y$, respectively. We impose the following assumptions on the potentials:
\begin{enumerate}[label=\textup{(A\arabic*)}, ref=\textup{(A\arabic*)}]
    \item\label{assump:A1} $a \in C^{\infty}(Y)$, $\mathbf{A}\in C^{\infty}(Y)$
    \item\label{assump:A2} The operator
    $\bigl(i\nabla_{h} + \mathbf{A}\bigr)^2 + a(y) + \bigl(\tfrac{d-2}{2}\bigr)^2$
    is positive on $Y$, i.e., there exists $\delta > 0$ such that
    \begin{equation}
    \int_{Y}
    \Bigl(\big|\bigl(i\nabla_{h} + \mathbf{A}\bigr)u\big|^2
    + \Bigl(a(y) + \bigl(\tfrac{d-2}{2}\bigr)^2\Bigr)|u|^2 \Bigr)\,\mathrm{d} h
    \geq \delta \int_{ Y} |u|^2 \,\mathrm{d} h.
    \end{equation}
\end{enumerate}

\begin{theorem}\label{thm:main}
Assume that \(a(y)\) and $\A$ are given by \eqref{eq:A-form} and satisfy (A1) and (A2). Let \(d\ge 3\), \(\tau>0\), and
\(\phi(r,y) = \frac14 \bigl(\log r\bigr)^2\). Then there exists a constant \(C\),
depending on \(d\), \(\mathbf{A}\), \(a\) and \(\phi\), but not on \(\tau\) or \(u\),
such that for all \(u\in C^\infty_c(X)\) and all sufficiently large $\tau$,
\begin{equation}\label{est:main}
\| e^{\tau \phi} u \|_{L^{\frac{2d}{d-2}}(X)}
\le C \| e^{\tau \phi} H_{A,V} u \|_{L^{\frac{2d}{d+2}}(X)}.
\end{equation}
\end{theorem}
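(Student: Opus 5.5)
Following Jerison--Kenig, I would pass to logarithmic polar coordinates and reduce the estimate to a uniform bound for a one-dimensional family of resolvent-type operators indexed by the spectrum of $L_{\mathbf A,a}$. Set $r=e^t$, $t\in\R$, and $u=e^{-\frac{d-2}{2}t}v$. The measure is $dx=e^{dt}\,dt\,dh$. Since $\frac{d}{q}=\frac{d-2}{2}$ for $q=\frac{2d}{d-2}$ and $\frac{d}{p}=\frac{d+2}{2}$ for $p=\frac{2d}{d+2}$, we get $\|u\|_{L^q(X)}=\|v\|_{L^q(\R\times Y)}$. Moreover
\[
r^2H_{A,V}u=e^{-\frac{d-2}{2}t}\Bigl(-\partial_t^2+L_{\mathbf A,a}+\bigl(\tfrac{d-2}{2}\bigr)^2\Bigr)v=:e^{-\frac{d-2}{2}t}\bigl(-\partial_t^2+P\bigr)v,
\]
so $\|H_{A,V}u\|_{L^p(X)}=\|(-\partial_t^2+P)v\|_{L^p(\R\times Y)}$. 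The weight becomes $\phi=t^2/4$, and with $w=e^{\tau t^2/4}v$ the estimate \eqref{est:main} is equivalent to
\[
\|w\|_{L^q(\R\times Y)}\le C\bigl\|e^{\tau t^2/4}(-\partial_t^2+P)e^{-\tau t^2/4}w\bigr\|_{L^p(\R\times Y)}.
\]
By (A1)--(A2), $P$ is self-adjoint on $L^2(Y)$ with discrete spectrum $\nu_k^2\ge\delta$, where $\nu_k>0$, with an orthonormal eigenbasis $\{\varphi_k\}$. Note that $P$ is Hermitian because $i\,\mathrm{div}_h\mathbf A+2i\mathbf A\cdot\nabla_h$ is the symmetric part of $(i\nabla_h+\mathbf A)^2$. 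Let $\Pi_k$ denote the spectral projections.

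The conjugated operator is
\[
-\partial_t^2+\tfrac{\tau t}{1}\,\partial_t\cdot+\ldots,
\]
more precisely $-(\partial_t-\tfrac{\tau t}{2})^2+P$. The weight is convex in $t$ rather than linear, so I would freeze it: localize $w$ to unit $t$-intervals $I_j$ centred at $t_j$, where the conjugated operator is close to $-(\partial_t-s_j)^2+P$ with $s_j=\tau t_j/2$. For that constant-coefficient operator, factor in each mode:
\[
-(\partial_t-s)^2+\nu_k^2=(\nu_k-\partial_t+s)(\nu_k+\partial_t-s).
\]
The inverse $T_s$ acts on mode $k$ by convolution in $t$ with the kernel $e^{st}e^{-\nu_k|t|}/(2\nu_k)$. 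I would prove the key uniform estimate
\[
\|T_sF\|_{L^q(\R\times Y)}\le C\|F\|_{L^p(\R\times Y)}
\]
uniformly in $s$ with $\operatorname{dist}(s,\{\nu_k\})\ge c$. The proof splits the spectrum into modes with $|\nu_k-s|\le 1$, $|\nu_k-s|\ge1$ and $\nu_k\le s/2$, and the remaining range. The far modes are handled by comparison with $(-\partial_t^2+P)^{-1}$-type kernels, using Sobolev embedding on $\R\times Y$ via the heat-kernel or Hardy--Littlewood--Sobolev bounds for $P^{1/2}$. The near modes, with at most $O(s^{d-2})$ eigenvalues in a unit window by Weyl's law, are handled with Sogge's spectral cluster estimates $\|\chi_{[\lambda,\lambda+1)}(\sqrt P)\|_{L^2\to L^{q}(Y)}\lesssim \lambda^{\sigma(q)}$. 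Here I would assume that the cluster bounds for $P$ hold as for $\Delta_h$; this is plausible since $P$ is a smooth first-order perturbation of $-\Delta_h$.

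The main obstacle is the spectral gap. The numbers $\nu_k$ need not be separated, so a fixed $s$ may sit on an eigenvalue, and the kernel of mode $k$ then grows like $e^{(s-\nu_k)t}$. The quadratic weight is the remedy. On $I_j$ the effective frequency $\tau t/2$ sweeps through an interval of length about $\tau$, so no single $s$ is used. Instead of inverting with frozen $s$, I would construct the parametrix mode by mode with the exact weight: the Green's function of $-(\partial_t-\tau t/2)^2+\nu_k^2$ on $\R$, built from $e^{\pm\int(\nu_k\mp\tau t/2)}$, is a Gaussian-modulated kernel bounded by $\tau^{-1/2}$-scale smoothing, as in the Carleman estimates of Koch--Tataru/Escauriaza type. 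The convexity $\phi''=\tau/2>0$ yields $L^2$ gain $\tau^{1/2}$ uniformly in $\nu_k$. I would then interpolate this $L^2$ gain with the Sogge cluster bounds in a $TT^*$ / Stein--Tomas fashion, summing over unit frequency windows. Finally I would glue the $I_j$ pieces with cutoffs; the commutator errors are of lower order $O(1+\tau|t|)$ and are absorbed by the $\tau^{1/2}$ gain for $\tau$ large. The delicate point I expect is to make the window summation uniform without a gap, which is precisely why $\tau$ must be sufficiently large.
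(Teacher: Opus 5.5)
Your change of variables, the conjugated operator $-(\partial_t-\tau t/2)^2+P$, and your eventual plan are correct. That plan is the paper's architecture: an exact one-dimensional inverse for each mode $\nu_k$, combined with Sogge's cluster bounds. But the proposal stops where the actual proof begins, and the one quantitative claim you make is too weak.

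Write $\beta=\tau/2$, $\delta=|t-s|$, $u=(t+s)/2$. The operator $e^{\beta t^2/2}(-\partial_t^2+m^2)e^{-\beta t^2/2}$ has no Green's function on $\mathbb{R}$, since its adjoint annihilates the $L^2$ functions $e^{-\beta t^2/2\pm mt}$. Only left inverses exist, and the symmetric one $K_0=e^{\beta(t^2-s^2)/2-m|t-s|}/(2m)$ grows like $e^{(\beta|u|-m)\delta}$ once $\beta|u|>m$ (for the appropriate ordering of $t,s$). The paper therefore chooses, for each $t$, among $K_0$, $K_L$, $K_R$, which differ by multiples of $e^{\beta t^2/2\pm mt}e^{-\beta s^2/2\mp ms}$. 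The choice depends on whether $t<-m/\beta$, $|t|\le m/\beta$ or $t>m/\beta$, and on the sign of $t$ when $m\le\sqrt\beta$. Your remark about $e^{\pm\int(\nu_k\mp\tau t/2)}$ gestures at this but does not carry it out.

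More seriously, a kernel bound of size $\tau^{-1/2}$ uniformly in $\nu_k$ cannot close the argument. Apply Sogge's $L^2\to L^q$ bound, its dual $L^p\to L^2$ (each costing $(1+j)^{1/2-1/d}$), and orthogonality inside a cluster. Then the cluster $\nu_k\in[j,j+1)$ contributes $(1+j)^{1-2/d}\sup|K_{\beta,\nu_k}(t,s)|$, and $\sum_j\tau^{-1/2}e^{-c\beta\delta^2}(1+j)^{1-2/d}$ diverges. What you need is the refined bound
$$|K_{\beta,m}|\lesssim(\sqrt\beta+m)^{-1}e^{-c\beta\delta^2}\bigl(e^{-c|m-\beta u|\delta}+e^{-c|m+\beta u|\delta}\bigr)$$
for $m\ge\sqrt\beta$. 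This localizes the high clusters to $|j-\beta|u||\lesssim1/\delta$, each with weight $j^{-2/d}$. One then needs a summation lemma showing the total is $\lesssim\delta^{-(1-2/d)}$ uniformly in $\beta$ and $u$. Hardy--Littlewood--Sobolev in $t$ (since $1/p-1/q=2/d$) then finishes. You name ``making the window summation uniform'' as the delicate point but give no mechanism. A $TT^*$ or interpolation use of an $L^2$ gain is not what resolves it; pointwise kernel bounds with the $|m-\beta|u||$-localization are.

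Your final gluing step is both unnecessary and incorrect. A mode-wise left inverse defined on all of $\mathbb{R}$ needs no localization in $t$. If you do localize, the commutator is $[(\partial_t-\tau t/2)^2,\chi]=\chi''+2\chi'(\partial_t-\tau t/2)$. It produces a derivative term and a term of size $\tau|t|$, and neither can be absorbed. The $\tau^{1/2}$ gain from convexity is an $L^2$ statement, not available in the $L^p\to L^q$ estimate. In any case $\tau|t|\gg\tau^{1/2}$ once $|t|\gg\tau^{-1/2}$. Freezing $s=\tau t/2$ on unit intervals is also ill-suited, because $s$ varies by $\tau/2$ across each one.

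Assuming Sogge's cluster bounds for $\sqrt{L_{\mathbf A,a}+((d-2)/2)^2}$ is the same assumption the paper makes, so that part is fine. Note that $q=2d/(d-2)$ is exactly the Stein--Tomas endpoint on the $(d-1)$-dimensional $Y$, which gives the exponent $1/2-1/d$.
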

 \vspace{0.2cm}

As an application of Theorem \ref{thm:main}, we study the unique continuation property of $H_{A,V}$ in the Euclidean case, that is, the section cross $Y=\mathbb{S}^{d-1}$. We first recall the definition of unique continuation. Consider a partial differential operator $P(x, D)$ of order $m$ in $\mathbb R^d$.
Let $\Sigma$ be an oriented hypersurface in $\mathbb R^d$ that can be represented as a nondegenerate level set $\Sigma=\{\psi = 0\}$ of a smooth function $\psi$.
The sign of $\psi$ away from $\Sigma$ determines the orientation of $\Sigma$.
Denote the two sides of $\Sigma$ by $\Sigma^+ =\{\psi> 0\}$ and $\Sigma^- =\{\psi< 0\}$.
Then we define the unique continuation property across $\Sigma$ for solutions to $P(x, D)u = 0$ as follows:

\begin{definition}
We say that the unique continuation property across $\Sigma$ holds for the operator $P(x, D)$ if for each $x_0 \in \Sigma$ there exists a neighborhood $V$ of $x_0$ such that the following holds: Let $u$ be a solution to $P(x, D)u = 0$ in $V$ such that $u = 0$ in $\Sigma^+ \cap V$. Then $u = 0$ near $x_0$.
\end{definition}

The electromagnetic operator $H_{A,V}$ has the unique continuation property in sense that:
\begin{theorem}
Let $D_{a,b} = \{a<|x|<b\}$, and let $f\in W^{2,p}_{\mathrm{loc}}(\mathbb R^d)$ solve $H_{A,V}f=0$ in $\mathbb R^d$ and satisfy assumptions \ref{assump:A1}--\ref{assump:A2}. If $f=0$ on the shell $a<|x|<b$, then $f=0$.
 \label{prob:5.2}
\end{theorem}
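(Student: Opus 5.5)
We will deduce Theorem \ref{prob:5.2} from the Carleman estimate of Theorem \ref{thm:main}, with $Y=\mathbb S^{d-1}$, by splitting $f$ into its part inside the ball $\{|x|\le a\}$ and its part outside $\{|x|\ge b\}$ and treating each one separately. The weight $\phi(r)=\frac14(\log r)^2$ grows as $r\to0$ and as $r\to\infty$, and it is smallest at $r=1$. By dilation invariance of $H_{A,V}$ (the potentials are homogeneous of degree $-1$ and $-2$), we may rescale so that $1\in(a,b)$, or more conveniently place the shell wherever we need it relative to $r=1$. Equivalently, we can use the translated weight $\frac14(\log(r/r_0))^2$, since $u(\cdot)\mapsto u(\lambda\cdot)$ commutes with $H_{A,V}$ up to the factor $\lambda^2$ and preserves the scaling-critical $L^{2d/(d+2)}\to L^{2d/(d-2)}$ norms up to constants independent of $\tau$.

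\emph{Interior part.} Put $f_1=f\,\mathbf 1_{\{|x|<b\}}$. Since $f$ vanishes on the shell, $f_1$ is $W^{2,p}_{\rm loc}$ and $H_{A,V}f_1=0$ on $\{0<|x|<b\}$. Rescale so that $b'<1$ for some $a<b'<b$, and take $r_0$ with $a<r_0<b'$. The weight $\psi=\frac14(\log(r/r_0))^2$ is decreasing on $(0,r_0)$ and vanishes at $r=r_0$. To make $f_1$ admissible, take a cutoff $\eta_\epsilon$ vanishing for $|x|<\epsilon$ and equal to $1$ for $|x|>2\epsilon$, together with a standard mollification, and apply \eqref{est:main} to $u=\eta_\epsilon f_1$. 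Then $H_{A,V}u$ is supported in $\{\epsilon<|x|<2\epsilon\}$, because on the shell $u=0$. This gives
\[
\|e^{\tau\psi}f_1\|_{L^{q}(\{2\epsilon<|x|<\rho\})}\le C\,\|e^{\tau\psi}[H_{A,V},\eta_\epsilon]f_1\|_{L^{p}(\{\epsilon<|x|<2\epsilon\})}.
\]
This is the wrong direction, so I instead arrange the weight so that the origin region carries the smaller weight. The cleanest way is to use the Kelvin-type inversion $x\mapsto x/|x|^2$, which maps $H_{A,V}$ to an operator of the same form (with $\mathbf A\mapsto\mathbf A$ and $a$ unchanged up to the conformal factor $|x|^{-d-2}$) and so swaps $0$ and $\infty$.

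\emph{The origin.} The main obstacle is the behaviour at $r=0$, where $f$ may be singular; I would handle it through local integrability. Local elliptic theory near the origin, combined with the Hardy inequality implied by \ref{assump:A2}, gives $f\in L^{q}$ near $0$ with $q=2d/(d-2)$. The commutator term is $O(\epsilon^{-1}|\nabla f|+\epsilon^{-2}|f|)$, and its $L^{p}$ norm on the annulus is bounded by $C\|f\|_{L^q(\epsilon<|x|<2\epsilon)}+C\|\nabla f\|_{L^2(\epsilon<|x|<2\epsilon)}$, by Hölder's inequality and scaling. This tends to $0$ as $\epsilon\to0$ provided the weight is bounded there; that is where the region with the small weight must sit.

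\emph{Conclusion.} After the inversion, so that $0$ carries the small weight, one obtains $\|e^{\tau(\psi-\psi(\rho))}f\|_{L^q(\{\psi>\psi(\rho)\})}\le C$ uniformly in $\tau$. Letting $\tau\to\infty$ gives $f=0$ on the corresponding region. Repeating the argument for the exterior region, where the decay of the cutoff at large $R$ is needed with $f\in L^q$ near infinity, yields $f\equiv0$.
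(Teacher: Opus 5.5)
There is a genuine gap. At no point does your argument make the commutator term carry a \emph{smaller} weight than the region where you want to prove vanishing, and neither device you propose can achieve this.

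The Kelvin inversion does not change which end of the cone carries the large weight. Since $(\log(1/r))^2=(\log r)^2$, the map $r\mapsto 1/r$ sends $\frac14(\log(r/r_0))^2$ to $\frac14(\log(r\,r_0))^2$. This is a weight of the same shape centred at $1/r_0$, and it still blows up both as $r\to0$ and as $r\to\infty$. In fact no weight $\phi_R(x)=\frac14(\log(|x|/R))^2$ is bounded near the origin, so your ``origin'' step fails. For fixed $\tau$ the commutator on $\{\epsilon<|x|<2\epsilon\}$ is multiplied by $e^{\frac{\tau}{4}(\log(\epsilon/R))^2}\to\infty$. Letting $\epsilon\to0$ inside the Carleman estimate would therefore require $\|[H_{A,V},\eta_\epsilon]f\|_{L^p}$ to decay faster than $e^{-\frac{\tau}{4}(\log\epsilon)^2}$ for every $\tau$. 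That is vanishing of infinite order at $0$, which is exactly the extra hypothesis of Theorem~\ref{theorem1.3} and is not available here. Your exterior step has the same defect, and in addition $f\in L^q$ near infinity is not given, since $f$ is only in $W^{2,p}_{\mathrm{loc}}$.

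The missing idea is to use the monotonicity of $\phi_R$ on one side of its minimum. You place the centre $R$ \emph{beyond} the cutoff region and keep every cutoff inside a fixed compact annulus, as the paper does.

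For inward continuation, take $\chi=1$ on $\{r_5<|x|<a\}$, and $\chi=0$ for $|x|<r_6$ and for $|x|>r_4$, where $r_4<b$. Choose $R<r_6<r_5<a-\delta$. Then:
\begin{itemize}
\item $\phi_R$ increases in $|x|$ on $\operatorname{supp}\chi$;
\item $H_{A,V}(\chi f)=[H_{A,V},\chi]f$ is supported in $\{r_6<|x|<r_5\}$, because the outer transition zone lies in the shell, where $f$ and $\nabla f$ vanish;
\item Lemma~\ref{lemma5.3} then gives $\|f\|_{L^q(a-\delta<|x|<a)}\le Ce^{-\tau(\phi_R(a-\delta)-\phi_R(r_5))}\|[H_{A,V},\chi]f\|_{L^p}\to0$ as $\tau\to\infty$.
\end{itemize}
Outward continuation is symmetric, with $R>r_3$.

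Your own first attempt already works once you fix $\epsilon$ and take $r_0<\epsilon$ instead of $r_0\in(a,b')$. You then obtain $f=0$ on $\{3\epsilon<|x|<a\}$ and let $\epsilon\to0$ only afterwards. In that argument neither the behaviour of $f$ at $0$ or at $\infty$, nor any inversion, is ever needed.
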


In addition, we may consider the so-called \textit{strong unique continuation property} (SUCP).
Roughly speaking, an operator $P$ is said to possess the SUCP if any solution $u$ of $Pu=0$ that vanishes of infinite order at some point $x_0$ must vanish identically in the connected component containing $x_0$.
The Carleman estimate approach is one of the most powerful tools to establish the strong unique continuation property for second-order differential operators.

\begin{theorem}
    Let $f\in W^{2,p}_{\mathrm{loc}}(\mathbb R^d)$ be a solution to $H_{A,V} f=0$ in $\mathbb R^d$, and suppose that the assumptions \ref{assump:A1}--\ref{assump:A2} hold.
Assume that $f$ satisfies the following vanishing condition at the origin:
\[
\int_{|x|<R}\big(|f|^p+|\nabla_A f|^p\big)\mathrm dx
= O\big(e^{-N(\log R)^2}\big)\qquad \text{for all } N>0.
\]where $\nabla_A:=i\nabla+A$.
Then $f=0$ almost everywhere in $\mathbb R^d$.
\label{theorem1.3}
\end{theorem}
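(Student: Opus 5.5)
We derive Theorem~\ref{theorem1.3} from the Carleman estimate of Theorem~\ref{thm:main} with $X=\mathbb R^d$, $Y=\mathbb S^{d-1}$, $p=\frac{2d}{d+2}$ and $q=\frac{2d}{d-2}$. The weight $\phi(r)=\frac14(\log r)^2$ blows up both as $r\to0$ and as $r\to\infty$. We use only its behaviour near the origin, where it matches the vanishing hypothesis $O(e^{-N(\log R)^2})$. Since $\phi$ is not monotone in $r$, the estimate will be applied on a region $\{r<R_0\}$ with $R_0<1$, where $\phi$ is strictly decreasing in $r$.

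\textbf{Step 1: cutoff.} Fix $0<\varepsilon<R_0<1$. Choose $\eta_\varepsilon\in C_c^\infty$ vanishing for $r<\varepsilon/2$, equal to $1$ for $\varepsilon<r<R_0/2$, and vanishing for $r>R_0$. Set $u=\eta_\varepsilon f$. We have $H_{A,V}f=0$, and the transversal gauge $A\cdot dr=0$ makes $A\cdot\nabla\eta=0$. Hence
\[
H_{A,V}u=-(\Delta\eta_\varepsilon)f-2\nabla\eta_\varepsilon\cdot\nabla f
\]
(equivalently written with $\nabla_A f$), and this is supported in the two annuli $\{\varepsilon/2<r<\varepsilon\}$ and $\{R_0/2<r<R_0\}$. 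Theorem~\ref{thm:main} is stated for $C_c^\infty$ functions. We extend it to $u\in W^{2,p}$ with compact support in $X\setminus\{0\}$ by density: mollify, and use that the singular coefficients are smooth away from $0$. This yields the inequality with $q=2d/(d-2)$ on the left once we know $u\in L^q$, which holds by Sobolev embedding from $W^{2,p}$.

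\textbf{Step 2: the term near the origin.} On $\{\varepsilon/2<r<\varepsilon\}$ we have $|\Delta\eta_\varepsilon|\lesssim\varepsilon^{-2}$, $|\nabla\eta_\varepsilon|\lesssim\varepsilon^{-1}$ and $e^{\tau\phi}\le e^{\tau(\log(\varepsilon/2))^2/4}$. With $p=2d/(d+2)$ this gives
\[
\|e^{\tau\phi}H_{A,V}u\|_{L^p(r<\varepsilon)}\lesssim \varepsilon^{-2}e^{\tau(\log(\varepsilon/2))^2/4}\big(\|f\|_{L^p(r<\varepsilon)}+\|\nabla_A f\|_{L^p(r<\varepsilon)}\big).
\]
The hypothesis is stated with the exponent $p$, which we take to be this $p$. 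If it is a general $p$, we use Hölder on the bounded ball, or upgrade via $W^{2,p}_{\rm loc}$. By hypothesis the right-hand side is at most $C\varepsilon^{-2}e^{\tau(\log\varepsilon)^2/4+C\tau|\log\varepsilon|}e^{-N(\log\varepsilon)^2/p}$. Choosing $N>p\tau$ makes this tend to $0$ as $\varepsilon\to0$ for each fixed $\tau$. The same hypothesis also controls $|A|f$, since $|A|\sim r^{-1}$ costs only an extra factor $\varepsilon^{-1}$, which is absorbed in the same way.

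\textbf{Step 3: limit and conclusion.} Letting $\varepsilon\to0$ gives
\[
\|e^{\tau\phi}f\|_{L^q(r<R_0/2)}\le C\|e^{\tau\phi}H_{A,V}(\eta f)\|_{L^p(R_0/2<r<R_0)}\le C e^{\tau\phi(R_0/2)}\|f\|_{W^{1,p}(R_0/2<r<R_0)}.
\]
For $r<R_1<R_0/2$ we have $\phi(r)\ge\phi(R_1)>\phi(R_0/2)$. Restricting the left side to $\{r<R_1\}$ therefore gives
\[
\|f\|_{L^q(r<R_1)}\le Ce^{-\tau(\phi(R_1)-\phi(R_0/2))}.
\]
Letting $\tau\to\infty$ yields $f=0$ on $\{r<R_1\}$, and since $R_1<R_0/2$ is arbitrary, $f=0$ on the ball $\{r<R_0/2\}$. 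Then $f$ vanishes on a shell inside this ball, and Theorem~\ref{prob:5.2} gives $f\equiv0$ on $\mathbb R^d$.

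\textbf{Main obstacle.} The delicate step is the approximation in Step~1, together with checking that the commutator terms are controlled by exactly the quantities $|f|^p+|\nabla_A f|^p$ in the hypothesis. This requires handling $\nabla f$ versus $\nabla_A f$ near the singularity and matching the Lebesgue exponent. I would first try to write the commutator purely in terms of $\nabla_A f$ and $f$ using the gauge $A\cdot dr=0$. Then I would use local elliptic regularity for $H_{A,V}$ in the annuli to pass from the $W^{2,p}_{\rm loc}$ information to the $L^{2d/(d+2)}$ norms needed there.
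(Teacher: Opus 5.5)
Your proposal is correct and follows the paper's proof essentially step for step. You cut off near the origin and near $R_0$, apply the Carleman estimate, and kill the inner commutator term with the $e^{-N(\log R)^2}$ vanishing rate (any $N>\tau p/4$ suffices, so your $N>p\tau$ is more than enough); you then pass to the limit in the cutoff, let $\tau\to\infty$ using that $\phi$ is decreasing on $(0,1)$, and finish with Theorem~\ref{prob:5.2}. The only differences are cosmetic: you use a single cutoff $\eta_\varepsilon$ (whose outer part should be kept fixed in $\varepsilon$) instead of the paper's product $\rho(jx)\chi$, and you make explicit the restriction $R_0<1$ that the paper leaves implicit in ``$R_0$ sufficiently small''.
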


\vspace{0.2cm}

{\bf Organization.}The outline of this paper is as follows.
Section \ref{sec:proof} analyzes how the choice of weight affects the invertibility of the operator.
We show that a pointwise bounded inverse is unlikely to exist, then construct an appropriate bounded left-inverse and derive its estimates.
These estimates are subsequently used to prove the main theorem.
In Section \ref{sec:uc_application}, we apply our Carleman estimate to the unique continuation problem and prove Theorem \ref{prob:5.2} and Theorem \ref{theorem1.3}.
A scaled Carleman estimate is established, which enables unique continuation across the unit sphere.

\section{Proof of Theorem \ref{thm:main}}\label{sec:proof}

In this section we prove the main theorem. The main tool is Sogge's cluster estimates on section cross $Y$.

\subsection{Reduction to the conjugated operator}We recall \eqref{eq:LAa-general'} 
\begin{equation*}
\begin{aligned}
H_{A,V}
&= -\partial_r^2-\frac{d-1}{r}\partial_r+\frac{L_{\mathbf{A},a}}{r^2},
\end{aligned}
\label{eq:1.13}
\end{equation*}
where \(\hat x\in Y\), \(|\mathbf{A}(\hat x)|^2=\sum_{i,j=1}^{d-1}h^{jk}\mathbf A_j \mathbf A_k\), and
\begin{equation}
L_{\mathbf{A},a}= -\Delta_h+\bigl(|\mathbf{A}(\hat x)|^2+i\operatorname{div}_h\mathbf{A}(\hat x)+a(\hat x)\bigr)+2i\mathbf{A}(\hat x)\cdot\nabla_h.
\label{eq:1.14}
\end{equation}
By making the change of variables
$r = e^{t}$, we obtain
\begin{equation}
H_{A,V}=e^{-2t}\Bigl(-(\partial_t)^2-(d-2)\partial_t+L_{\mathbf{A},a}\Bigr).
\end{equation}
To prove \eqref{est:main} in Theorem \ref{thm:main}, it suffices to show
\begin{equation}\label{est:main'}
\bigl\|e^{\tau t^2/4}v\bigr\|_{L^{q}(dtdh)}\le C\bigl\|e^{\tau t^2/4}Pv\bigr\|_{L^{p}(dtdh)},
\end{equation}
where
\begin{equation}\label{def:qp-P}
q=\frac{2d}{d-2},\quad p=\frac{2d}{d+2},\quad
P=-(\partial_t)^2+L_{\mathbf{A},a}+\Bigl(\frac{d-2}{2}\Bigr)^2.
\end{equation}

\vspace{2em}
\subsection{Left-inverse of operators in the distributional sense}

To prove \eqref{est:main'}, we set $h(t) =\tau t^2/4 = \beta t^2/2$, $\beta =\tau/2$ and define
\[
w = e^{h(t)}v,\quad F = e^{h(t)} P v,
\]
then we need to prove
\begin{equation}
\|w\|_{L^q(\mathrm{d}t\,\mathrm{d}h)}
\lesssim \|P_t w\|_{L^p(\mathrm{d}t\,\mathrm{d}h)}.
\label{eq:P_t-estimate}
\end{equation}
where the conjugated operator
\begin{equation}\label{def:Pt}
P_t = e^{\tau\frac{t^2}{4}} P e^{-\tau\frac{t^2}{4}}.
\end{equation}

A natural question is whether $P_t$ admits a well-defined inverse operator $P_t^{-1}$. If it exists, the problem can simply be reduced to whether $P_t^{-1}$ is $L^q$--$L^p$ bounded. 
 To illustrate the influence exerted by the weight function and the operator structure on the symbol, we first fix some notation.

Let $$T=T_{{\A},a}:=\bigl(i\nabla_h+\mathbf{A}\bigr)^2+a+\Bigl(\frac{d-2}{2}\Bigr)^2$$ be an elliptic self-adjoint operator on $Y$.
Since $Y$ is a compact manifold, $T$ admits discrete real eigenvalues $\{\lambda_{k}^2\}_{k\ge 0}$ and corresponding eigenfunctions $h_{k,\ell}$ in $L^2(Y)$, where $0\leq \ell\leq d(k)$ with multiplicity $d(k)$.
The corresponding eigenspaces are defined by
\begin{equation}\label{Hk}
H_k \overset{\mathrm{def}}{=}
\text{span}_{0\leq \ell\leq d(k)}\{ h_{k,\ell}\}.
\end{equation}
and the projector 
\begin{equation}\label{def:Hk}
\mathcal{H}_k: L^2(Y)\to H_k(Y).
\end{equation}

We now explicitly write  the symbol of $P_t$ restricted to $H_{k}$.
\begin{equation}
\eta^2 + 2i\tau (\partial_t\phi)\eta + \tau \partial_t^2\phi + \lambda_{k}^2 - \tau^2(\partial_t\phi)^2.
\end{equation}
In particular, $Y=\mathbb{S}^{d-1}$ $d\ge 3$. Let $\Delta_{\mathbb{S}^{d-1}}$ denote the usual Laplacian on $\mathbb{S}^{d-1}$, then the eigenvalues of the conformal Laplacian $-\Delta_{\mathbb{S}^{d-1}}+\bigl(\frac{d-2}{2}\bigr)^2$
are given by $\bigl(k+\frac{d-2}{2}\bigr)^2$. Choosing a linear weight function yields the following operator symbol:
\begin{equation}
\eta^2 + 2i\tau \eta +  \lambda_k^2 - \tau^2,
\label{eq:symbol_laplacian_linear_weight}
\end{equation}
where
$\lambda_k^2=\big(k+\frac{d-2}{2}\big)^2$. As discussed in Section~\ref{sec:intro}, the validity of the Carleman estimates is investigated under the assumption $\operatorname{dist}\Bigl(\tau,\mathbb Z+\tfrac{d-2}{2}\Bigr)\ge\delta>0$.
Otherwise, resonance phenomena arise and the operator does not possess a well-defined inverse. This assumption is reasonable in our present setting, since the eigenvalues of the Laplacian on the unit sphere are mutually separated. In other words, one can choose a suitable $\tau$ to avoid all resonance points. Unfortunately, this property depends heavily on the operator, the underlying manifold, and the choice of weight function. A similar argument can be found in Section 4 of \cite{KT01}.
More precisely, if we add a perturbation to the operator, the originally separated spectrum on the unit sphere may spread out, and it may become impossible to avoid all resonance points regardless of the choice of $\tau$. We also point out, however, that under certain special circumstances, the influence of perturbations on the spectral distribution of the operator can also be quantified. If $Y=\mathbb{S}^{d-1}$, for perturbations of the spherical Laplacian by an odd function, it is shown in \cite{Guillemin78} that
\begin{equation}\label{eq:1.3}
\bigl|\lambda_{k}^2 - \bigl(k + (d-2)/2\bigr)^2\bigr| = O\bigl(1/k^2\bigr).
\end{equation}
This describes the asymptotic behaviour of the spectrum and shows that such a perturbation becomes negligible for sufficiently large $k$. In this case, one may still select an appropriate $\tau$ avoiding all resonances. Nevertheless, for first-order perturbations or for a general manifold $Y$, a careful study of the spectral distribution is required, which constitutes a delicate problem.

It is pointed out in \cite{KT01} that one may resort to a convex function as the weight function to tackle this problem. In what follows, we shall prove that \eqref{est:main} holds for our weight function
\[
\phi(x) = \frac14 \bigl(\log|x|\bigr)^2.
\]

Here we claim that, with this choice of weight function, $P_t$ possesses a well-defined left inverse operator. The lemmas below provide theire explicit form together with the corresponding estimates.

\begin{lemma}\label{lem:kernel}
Let \(\beta \ge 1\), \(m > 0\), \(h_\beta(t)=\beta t^2/2\), and
\[
Q_{\beta,m}(t)=e^{h_\beta(t)}\bigl(-\partial_t^2 + m^2\bigr)e^{-h_\beta(t)}.
\]
For every \(w\in C_c^\infty(\mathbb R)\) there exists a measurable kernel
\(K_{\beta,m}(t,s)\) such that
\begin{equation}\label{eq:5.8}
w(t)=\int_{\mathbb R} K_{\beta,m}(t,s)\, \big[ Q_{\beta,m} w\big](s)\,\mathrm{d}s.
\end{equation}
Let \(\delta=|t-s|\) and \(u=(t+s)/2\). Then the kernel satisfies that
\begin{equation}\label{eq:5.9}
|K_{\beta,m}(t,s)| \le \frac{C}{\sqrt{\beta}+m} e^{-c\beta \delta^2},\qquad 0< m\le \sqrt{\beta},
\end{equation}
and, for \(m\ge \sqrt{\beta}\),
\begin{equation}\label{eq:5.10}
|K_{\beta,m}(t,s)| \le \frac{C}{\sqrt{\beta}+m} e^{-c\beta \delta^2}
\Bigl(e^{-c|m-\beta u|\delta}+e^{-c|m+\beta u|\delta}\Bigr),
\end{equation}
where the constants \(C,c>0\) are independent of \(\beta,m\).
\end{lemma}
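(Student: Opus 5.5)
The plan is to factor $Q_{\beta,m}$ into two first-order operators, each of which is explicitly invertible on compactly supported functions, and then to estimate the composition of the two resulting kernels. Since $e^{h_\beta}\partial_t e^{-h_\beta}=\partial_t-\beta t$, we have
\[
Q_{\beta,m}=-(\partial_t-\beta t)^2+m^2=-(\partial_t-\beta t-m)(\partial_t-\beta t+m)=:-D_+D_-,
\]
where the two factors commute up to the scalar $[\partial_t-\beta t,\pm m]=0$. We also have $D_\pm=e^{g_\pm}\partial_t e^{-g_\pm}$ with $g_\pm(t)=\beta t^2/2\pm mt$, and each $g_\pm$ is strictly convex with minimum at $t_\pm=\mp m/\beta$.

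\textbf{Step 1: inverting a single factor.} For $w\in C_c^\infty(\mathbb R)$ both representations
\[
w(t)=\int_{-\infty}^t e^{g(t)-g(s)}(Dw)(s)\,ds=-\int_t^\infty e^{g(t)-g(s)}(Dw)(s)\,ds
\]
hold, because $w$ vanishes at $\pm\infty$. We choose, for each $t$, the side on which $g(s)\ge g(t)$: take $s>t$ if $t\ge t_\pm$ and $s<t$ otherwise. This defines a measurable kernel $k_\pm(t,s)$, supported where $(s-t)g_\pm'(t)\ge0$, such that $w=\int k_\pm(t,s)(D_\pm w)(s)\,ds$. By convexity,
\[
g(s)-g(t)=g'(t)(s-t)+\tfrac{\beta}{2}(s-t)^2\ge |g'(t)||s-t|+\tfrac{\beta}{2}|s-t|^2 ,
\]
so $|k_\pm(t,s)|\le e^{-\beta|t-s|^2/2-|\beta t\pm m||t-s|}$. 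Equivalently, since $g(t)-g(s)=(t-s)(\beta u\pm m)$, this gives decay $e^{-|\beta u\pm m|\delta}$ together with the Gaussian factor.

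\textbf{Step 2: composing the inverses.} Applying Step 1 twice gives $w=\int k_-(t,\sigma)(D_-w)(\sigma)\,d\sigma$ and $D_-w=\int k_+(\sigma,s)(D_+D_-w)(s)\,ds$. Hence
\[
K_{\beta,m}(t,s)=-\int k_-(t,\sigma)k_+(\sigma,s)\,d\sigma ,
\]
and (\ref{eq:5.8}) holds by Fubini. Fubini is justified because the integrand is bounded by Gaussians in $\sigma$ and $Q_{\beta,m}w$ is compactly supported. The bound then reduces to
\[
|K_{\beta,m}(t,s)|\le\int e^{-\frac{\beta}{2}(|t-\sigma|^2+|\sigma-s|^2)}\,e^{-|\beta t-m||t-\sigma|-|\beta\sigma+m||\sigma-s|}\,d\sigma .
\]
The elementary inequality $|t-\sigma|^2+|\sigma-s|^2\ge \delta^2/2+2|\sigma-u|^2$ extracts $e^{-c\beta\delta^2}$. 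The remaining Gaussian in $\sigma$ contributes a factor $\beta^{-1/2}$, which already gives (\ref{eq:5.9}) when $m\le\sqrt\beta$, since then $(\sqrt\beta+m)^{-1}\simeq\beta^{-1/2}$.

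\textbf{Step 3: the regime $m\ge\sqrt\beta$ (main obstacle).} Here we need the improved prefactor $1/m$ together with the factors $e^{-c|m\mp\beta u|\delta}$. The linear terms must therefore supply both the $\sigma$-integrability, giving $\int e^{-c m|\cdot|}\lesssim 1/m$, and the $\delta$-decay. My plan is to split according to whether $|\beta t-m|$ and $|\beta\sigma+m|$ are $\gtrsim m$. On the part where one of them is $\gtrsim m$, integrating the corresponding exponential in $\sigma$ gives $1/m$. The two quantities cannot both be small, since $\beta\sigma+m-(\beta t-m)=\beta(\sigma-t)+2m$, so one of them is at least $m$ unless $\beta|\sigma-t|\gtrsim m$; in that exceptional case the Gaussian yields $e^{-cm^2/\beta}\le e^{-cm|\sigma-t|}$, again integrable with bound $1/m$. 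Finally, we convert $|\beta t-m|$ and $|\beta\sigma+m|$ into $|\beta u\mp m|$, at the cost of $\beta|t-u|\lesssim\beta\delta$ and $\beta|\sigma-u|$. These error terms are absorbed by half of the Gaussian, using $\beta\delta\cdot\delta\le\beta\delta^2$. Keeping track of which factor carries which sign is where I expect the bookkeeping to be delicate, but only the weaker sum $e^{-c|m-\beta u|\delta}+e^{-c|m+\beta u|\delta}$ is needed, so a crude case split suffices.
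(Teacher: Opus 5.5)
Your argument is correct in substance, but it is organized differently from the paper's. The paper never factors $Q_{\beta,m}$. It starts from the explicit two-sided Green's function $K_0(t,s)=\frac{1}{2m}e^{h_\beta(t)-h_\beta(s)-m|t-s|}$, and uses that $e^{-h_\beta(s)\pm ms}$ annihilates $Q_{\beta,m}C_c^\infty(\mathbb R)$ to produce the one-sided kernels $K_L,K_R$ (the $\sinh$ formulas). For each $t$ it then selects $K_L$, $K_0$ or $K_R$ according to $t<-m/\beta$, $|t|\le m/\beta$, $t>m/\beta$; when $m\le\sqrt\beta$ it picks $K_L$ or $K_R$ by the sign of $t$. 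Every bound then follows from exact identities such as $h_\beta(t)-h_\beta(s)-m|t-s|=-(m\mp\beta u)\delta$ and the absorption inequality \eqref{eq:basic_ineq}, with the prefactor $1/(2m)$ read off directly.

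Your factorization $Q_{\beta,m}=-D_+D_-$, with the convexity-based choice of integration side, reproduces almost the same kernel. Carrying out $-\int k_-(t,\sigma)k_+(\sigma,s)\,d\sigma$ gives exactly $K_R$ for $t\ge m/\beta$ and exactly $K_L$ for $t\le -m/\beta$. For $|t|<m/\beta$ it gives $K_0-\frac{1}{2m}e^{-2m^2/\beta}e^{h_\beta(t)-mt-h_\beta(s)-ms}$, i.e.\ the paper's middle choice plus a harmless homogeneous term. What your route buys is that \eqref{eq:5.8} and the bound $\sqrt{\pi/\beta}\,e^{-\beta\delta^2/4}$ (hence \eqref{eq:5.9}) come uniformly in $m$ from a Gaussian convolution, with no case analysis. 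What it costs is that the sharp prefactor $1/m$ and the $u$-dependent decay must be extracted from a $\sigma$-integral rather than from closed forms.

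Your Step 3 does work, but two details of the sketch need fixing.

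\textbf{The reversed inequality.} The inequality $e^{-cm^2/\beta}\le e^{-cm|\sigma-t|}$ goes the wrong way. On the exceptional set $\beta|t-\sigma|\ge m$, what you actually use is $\frac{\beta}{8}|t-\sigma|^2\ge \frac m8|t-\sigma|$, so a piece of the Gaussian is itself the integrable factor.

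\textbf{The conversion errors.} These are not dominated by half the Gaussian. Set $\mu=\min(|m-\beta u|,|m+\beta u|)$ and use $|t-\sigma|+|\sigma-s|\ge\delta$. One only gets $|\beta t-m||t-\sigma|+|\beta\sigma+m||\sigma-s|\ge \mu\delta-\frac54\beta\bigl(|t-\sigma|^2+|\sigma-s|^2\bigr)$. So convert only a fifth of the linear terms. Keep the other four fifths, together with $\frac\beta8|t-\sigma|^2$, for your three-case $O(1/m)$ bound on the $\sigma$-integral. Use what remains of the Gaussian for the factor $e^{-\beta\delta^2/16}$. This gives $|K_{\beta,m}|\lesssim m^{-1}e^{-\beta\delta^2/16-\mu\delta/5}$, which implies \eqref{eq:5.10}.
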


\begin{lemma}\label{lem:bandsum}
Let \(\gamma=2/d\), \(a_0=1-\gamma\), and \(\beta\ge 1\). Uniformly in \(M\ge 0\)
and \(\delta>0\),

\begin{equation}
e^{-c\beta\delta^2}
\left[
\beta^{-1/2}\sum_{0\le j\le 2\sqrt{\beta}}(1+j)^{a_0}
+\sum_{j>2\sqrt{\beta}} j^{-\gamma} e^{-c|j-M|\delta}
\right]
\le C\delta^{-a_0}.
\label{eq:sum}
\end{equation}
\end{lemma}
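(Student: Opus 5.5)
The plan is to bound the two bracketed terms separately. For the first term we will use the Gaussian factor $e^{-c\beta\delta^2}$ to absorb a power of $\beta$. For the second we will use a rearrangement argument that turns the shifted exponential $e^{-c|j-M|\delta}$ into one centred at the origin, which makes the bound uniform in $M$. Throughout we use that $d\ge 3$ gives $\gamma=2/d\in(0,1)$, so $a_0=1-\gamma\in(0,1)$.

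\emph{First term.} We have $\sum_{0\le j\le 2\sqrt\beta}(1+j)^{a_0}\lesssim (1+2\sqrt\beta)^{1+a_0}\lesssim \beta^{(1+a_0)/2}$, using $\beta\ge1$. Hence
\[
e^{-c\beta\delta^2}\beta^{-1/2}\sum_{0\le j\le 2\sqrt\beta}(1+j)^{a_0}\lesssim \beta^{a_0/2}e^{-c\beta\delta^2}=\delta^{-a_0}\,(\beta\delta^2)^{a_0/2}e^{-c\beta\delta^2}\le C\delta^{-a_0},
\]
because $x\mapsto x^{a_0/2}e^{-cx}$ is bounded on $[0,\infty)$.

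\emph{Second term, reduction to $M=0$.} The sequence $a_j=j^{-\gamma}$ is decreasing in $j$. The weights $b_j=e^{-c|j-M|\delta}$ depend only on $|j-M|$, and they decrease as $|j-M|$ grows. Consequently, for each $k\ge0$ the value $e^{-ck\delta}$ is attained by at most two indices $j$, namely those with $|j-M|\in[k,k+1)$, and any larger value of $b_j$ has smaller $k$. By the rearrangement inequality, the sum $\sum a_jb_j$ does not decrease if we reorder the $b_j$ decreasingly against the decreasing $a_j$. Bounding each value's multiplicity by $2$, this gives, uniformly in $M\ge0$,
\[
\sum_{j>2\sqrt\beta} j^{-\gamma}e^{-c|j-M|\delta}\le 2\sum_{k\ge0}(k+1)^{-\gamma}e^{-ck\delta}\lesssim \int_0^\infty (1+x)^{-\gamma}e^{-cx\delta}\,\mathrm dx .
\]
This bookkeeping step is the only delicate point. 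One must check that comparing the tail sum over $j>2\sqrt\beta$ with a full sum costs nothing, which holds because all terms are positive. One must also check that the level sets of $b_j$ have cardinality at most $2$.

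\emph{Second term, final bounds.} We split according to the size of $\delta$.
\begin{itemize}
\item For $\delta<1$: substituting $x=y/\delta$ gives $\int_0^\infty(1+x)^{-\gamma}e^{-cx\delta}\,\mathrm dx\le \delta^{\gamma-1}\int_0^\infty y^{-\gamma}e^{-cy}\,\mathrm dy=C\delta^{-a_0}$. The last integral converges since $\gamma<1$. The factor $e^{-c\beta\delta^2}\le1$ is then simply discarded.
\item For $\delta\ge1$: the integral is at most $\int_0^\infty e^{-cx}\,\mathrm dx=C$. We then use $e^{-c\beta\delta^2}\le e^{-c\delta^2}\le C\delta^{-a_0}$, valid since $\beta\ge1$ and $\delta\ge1$.
\end{itemize}
Adding the bounds for the two terms yields \eqref{eq:sum} with constants independent of $\beta$, $M$ and $\delta$.
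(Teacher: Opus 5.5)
\emph{Verdict:} your argument is correct. The first term is handled exactly as in the paper, but for the second term you take a genuinely different route.

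\emph{The paper's route.} It works by explicit case analysis. For $\delta\ge1$ it discards $j^{-\gamma}$ and sums the two-sided geometric series in $|j-M|$. For $0<\delta<1$ it separates two regimes:
$M\le 2/\delta$, where it splits the sum at $j=4/\delta$; and
$M>2/\delta$, where it splits into $j<M/2$, $|j-M|\le M/2$ and $j>\tfrac32 M$, using $M^{-\gamma}\lesssim\delta^{\gamma}$.
This is entirely elementary and shows where the sum concentrates, namely near $j\approx M$. It is, however, longer and more error-prone. Its Part 2 writes the intermediate factor $(1+\delta)^{-1}$, which should be $1+\delta^{-1}$; the conclusion is unaffected.

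\emph{Your route.} Your rearrangement step replaces all of this bookkeeping with one observation: the sum is largest when the peak of $e^{-c|j-M|\delta}$ sits on the largest values of $j^{-\gamma}$. This buys three things.
First, uniformity in $M$ becomes transparent.
Second, it shows the cutoff $j>2\sqrt\beta$ plays no role, since your bound holds for the full sum over $j\ge1$.
Third, everything reduces to the single scaling computation $\int_0^\infty x^{-\gamma}e^{-cx\delta}\,dx=C\delta^{-a_0}$.
The price is an appeal to the Hardy--Littlewood rearrangement inequality for infinite sequences. You can justify this by applying the finite version on $2\sqrt\beta<j\le N$, where the decreasing rearrangement of $j^{-\gamma}$ is bounded by $(1+i)^{-\gamma}$, and then letting $N\to\infty$.

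\emph{Two small points of precision in your write-up.}
First, the $b_j$ do not literally take the values $e^{-ck\delta}$. The clean statement is that at most $2k$ integers satisfy $|j-M|<k$. Hence the $i$-th largest $b_j$ (counting from $i=0$) is at most $e^{-c\lfloor i/2\rfloor\delta}$, which gives your bound $2\sum_{k\ge0}(k+1)^{-\gamma}e^{-ck\delta}$.
Second, the comparison $\sum_{k\ge0}(k+1)^{-\gamma}e^{-ck\delta}\lesssim\int_0^\infty(1+x)^{-\gamma}e^{-cx\delta}\,dx$ is not uniform as $\delta\to\infty$, since the left side is at least $1$ while the integral tends to $0$. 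You should write $\sum_{k\ge0}(k+1)^{-\gamma}e^{-ck\delta}\le 1+\int_0^\infty(1+x)^{-\gamma}e^{-cx\delta}\,dx$ instead. This changes nothing in either of your cases $\delta<1$ and $\delta\ge1$.
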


\vspace{2em}

\subsection{Proof of Theorem \ref{thm:main}}

The proofs of Lemma \ref{lem:kernel} and Lemma \ref{lem:bandsum} are deferred to the end of this section. With the two lemmas in hand, we now prove 
the  main theorem.

\begin{proof}
We define the cluster operator 
\begin{equation}\label{def:cluster}
E_jf=\chi_{[j,j+1)} f=\sum_{\{k:\,\lambda^2_k\in I_j\} }\mathcal{H}_k f
\end{equation} where $I_j := [j^2,(j+1)^2)$ and $\mathcal{H}_k$ is given by \eqref{def:Hk}.
Then, for $w\in L^2(Y)$
\[
w = \sum_j E_j w = \sum_j w_j= \sum_j \sum_{\{k:\,\lambda^2_k\in I_j\} }\mathcal{H}_k w.
\]

Applying Lemma \ref{lem:kernel}, $\mathcal{H}_k w$ admits the following representation.
\[\mathcal{H}_k  w=
\int_{\mathbb R} K_{\beta,\lambda_k}(t,s)\, e^{\frac{\tau t^2}{4}} P e^{-\frac{\tau t^2}{4}} \mathcal{H}_k w\,\mathrm{d}s
\]
Note that $\mathcal{H}_k$ is an angular operator, and it commutes with the radial operator.
\[
w= \sum_j \sum_{\lambda_k^2 \in I_j} \int_{\mathbb R} K_{\beta,\lambda_k}(t,s)\, \mathcal{H}_k F\,\mathrm{d}s,
\]
We next separate the radial and angular components. The estimates for the angular part are mainly based on Sogge's estimates \textup{Proposition 2.1 of \cite{Sog88}} and \(L^2\) orthogonality.
\begingroup
\allowdisplaybreaks
\begin{align}
\|w\|_{L^q(dh)}
&\le \sum_j \biggl\|\sum_{\lambda_k^2 \in I_j} \int_{\mathbb R} K_{\beta,\lambda_k}(t,s)\,\mathcal{H}_k F\,\mathrm{d}s\biggr\|_{L^q(dh)} \nonumber\\
&\le \sum_j (1+j)^{\frac12-\frac1d} \biggl\|\sum_{\lambda_k^2\in I_j} \int_{\mathbb R} K_{\beta,\lambda_k}(t,s)\,\mathcal{H}_k F\,\mathrm{d}s\biggr\|_{L^2(dh)} \nonumber\\
&\le \sum_j (1+j)^{\frac12-\frac1d} \int_{\mathbb R} \biggl\|\sum_{\lambda_k^2\in I_j} K_{\beta,\lambda_k}(t,s)\,\mathcal{H}_k F\biggr\|_{L^2(dh)}\,\mathrm{d}s \nonumber\\
&= \sum_j (1+j)^{\frac12-\frac1d}
\int_{\mathbb R}
\biggl(\int_{ Y} \Bigl|\sum_{\lambda_k^2\in I_j} K_{\beta,\lambda_k}(t,s)\mathcal{H}_k F\Bigr|^2 \mathrm{d}h\biggr)^{\frac12} \mathrm{d}s \nonumber\\
&= \sum_j (1+j)^{\frac12-\frac1d}
\int_{\mathbb R}
\biggl(\int_{ Y}
\Bigl(\sum_{\lambda_k^2\in I_j} K_{\beta,\lambda_k} \mathcal{H}_k F\Bigr)
\,\overline{\Bigl(\sum_{\lambda_{k'}^2\in I_j} K_{\beta,\lambda_{k'}} \mathcal{H}_{k'} F\Bigr)}
\,\mathrm{d}h\biggr)^{\frac12} \mathrm{d}s \nonumber\\
&\le \sum_j (1+j)^{\frac12-\frac1d}
\int_{\mathbb R}\sup_{\lambda_k^2\in I_j}|K_{\beta,\lambda_k}|
\biggl(\int_{ Y}\sum_{\lambda_k^2\in I_j} |\mathcal{H}_k F|^2 \mathrm{d}h\biggr)^{\frac12} \mathrm{d}s \nonumber\\
&= \sum_j (1+j)^{\frac12-\frac1d}
\int_{\mathbb R}\sup_{\lambda_k^2\in I_j}|K_{\beta,\lambda_k}|\,\|E_j F\|_{L^2(dh)}\,\mathrm{d}s \nonumber\\
&\le \sum_j (1+j)^{1-\frac2d}
\int_{\mathbb R}\sup_{\lambda_k^2\in I_j}|K_{\beta,\lambda_k}|\,\|F\|_{L^p(dh)}\,\mathrm{d}s,
\label{eq:w_estimate}
\end{align}
\endgroup
For \(\lambda_k\in [j,j+1)\), set \(\delta=|t-s|\), \(u=\frac{t+s}{2}\), \(M=\beta|u|\).
Let \(c_0\) denote the constant appearing in \eqref{eq:5.9} and \eqref{eq:5.10}.

We claim that for every \(j\),
\begin{equation}
\sup_{\lambda_k^2\in I_j}\bigl|K_{\beta,\lambda_k}(t,s)\bigr|
\le C\frac{e^{-c\beta\delta^2}}{\sqrt{\beta}+j},
\label{eq:4.16}
\end{equation}
Moreover, for \(j>2\sqrt{\beta}\), we have the stronger bound
\begin{equation}
\sup_{\lambda_k^2\in I_j}\bigl|K_{\beta,\lambda_k}(t,s)\bigr|
\le C j^{-1} e^{-c\beta\delta^2} e^{-c|j-M|\delta}.
\end{equation}

First, \eqref{eq:5.9} gives
\[
\sup_{\lambda_k^2\in I_j}\bigl|K_{\beta,\lambda_k}(t,s)\bigr|
\le C\frac{e^{-c_0\beta\delta^2}}{\sqrt{\beta}+\lambda_k}
\le C\frac{e^{-c\beta\delta^2}}{\sqrt{\beta}+j},
\qquad \bigl(c=\tfrac{c_0}{2}\bigr).
\]
This finishes the proof of \eqref{eq:4.16}.
\vspace{2em}
When \(j>2\sqrt{\beta}\), we have \(\lambda_k\ge j>2\sqrt{\beta}\), so we may apply
the estimate \eqref{eq:5.10}:
\[
\begin{aligned}
  \sup_{\lambda_k^2\in I_j}\bigl|K_{\beta,\lambda_k}(t,s)\bigr|
  &\le \sup_{\ell\in I_j} \frac{C}{\sqrt{\beta}+\lambda_k}
     \, e^{-c_0\beta\delta^2}
     \bigl(e^{-c_0|\lambda_k-\beta u|\delta}+e^{-c_0|\lambda_k+\beta u|\delta}\bigr) \\
  &\le 2C j^{-1} e^{-c_0\beta\delta^2}
     \sup_{\ell\in I_j} e^{-c_0|\lambda_k-M|\delta}.
\end{aligned}
\]

Since \(j\le \lambda_k\le j+1\), the reverse triangle inequality gives
\[
-|\lambda_k-M|\le 1-|j-M|.
\]
Therefore,
\[
\begin{aligned}
  \sup_{\lambda_k^2\in I_j}\bigl|K_{\beta,\lambda_k}(t,s)\bigr|
  &\le 2C j^{-1} e^{-c_0\beta\delta^2}
     e^{c_0\delta(1-|j-M|)} \\
  &\lesssim j^{-1} e^{-\frac{c_0}{2}\beta\delta^2}
     e^{-\frac{c_0}{2}|j-M|\delta} \\
  &= j^{-1} e^{-c\beta\delta^2} e^{-c|j-M|\delta}.
\end{aligned}
\]

This establishes the two pointwise estimates for the kernel,
which gives
\begin{equation}
\begin{aligned}
\|w\|_{L^q(dh)}
&\le \sum_j (1+j)^{1-\frac{2}{d}}
\int_{\mathbb R}\sup_{\lambda_k^2\in I_j}|K_{\beta,\lambda_k}|\,\|F\|_{L^p(dh)}\,\mathrm{d}s \\
&\le \sum_{j\le 2\sqrt{\beta}} (1+j)^{1-\frac{2}{d}}
\int_{\mathbb R}\sup_{\lambda_k^2\in I_j}|K_{\beta,\lambda_k}|\,\|F\|_{L^p(dh)}\,\mathrm{d}s \\
&\quad+\sum_{j> 2\sqrt{\beta}} (1+j)^{1-\frac{2}{d}}
\int_{\mathbb R}\sup_{\lambda_k^2\in I_j}|K_{\beta,\lambda_k}|\,\|F\|_{L^p(dh)}\,\mathrm{d}s \\
&\lesssim \int_{\mathbb R}
\Biggl(
\sum_{j\le 2\sqrt{\beta}} (1+j)^{1-\frac{2}{d}} \frac{e^{-c\beta \delta^2}}{\sqrt{\beta}+j}
+\sum_{j>2\sqrt{\beta}} (1+j)^{1-\frac{2}{d}} j^{-1} e^{-c\beta \delta^2} e^{-c|j-M|\delta}
\Biggr)\\
&\qquad \times
\|F\|_{L^p(dh)}\,\mathrm{d}s \\
&\lesssim \int_{\mathbb R} e^{-c\beta \delta^2}
\Biggl(
\sum_{j\le 2\sqrt{\beta}} (1+j)^{1-\frac{2}{d}} \beta^{-\frac12}
+\sum_{j>2\sqrt{\beta}} j^{-\frac{2}{d}} e^{-c|j-M|\delta}
\Biggr)  \|F\|_{L^p(dh)}\,\mathrm{d}s
\end{aligned}
\label{eq:w_estimate1}
\end{equation}

Then from \eqref{eq:w_estimate1} and \eqref{eq:sum},
\begin{align*}
\|w\|_{L^q(\mathrm{d}t\,\mathrm{d}h)}
&\lesssim
\biggl\|
\int_{\mathbb R}|t-s|^{-1+\frac{2}{d}}\|F\|_{L^p(\mathrm{d}h)}\,\mathrm{d}s
\biggr\|_{L^q(\mathrm{d}t)}
\lesssim \|F\|_{L^p(\mathrm{d}t\,\mathrm{d}h)}.
\end{align*}
The last step uses the Hardy--Littlewood--Sobolev inequality \cite{LL01}.

\end{proof}

This finishes the proof of our main theorem. In what follows, we prove Lemma \ref{lem:kernel} and Lemma \ref{lem:bandsum}.

\subsection{Proof of Lemma \ref{lem:kernel}}

\begin{proof}
One may verify directly that
\[
K_0(t,s)=\frac{e^{h_\beta(t)-h_\beta(s)-m|t-s|}}{2m}
\]
satisfies
\[
w(t)=\int_{\mathbb R} K_0(t,s)\,Q_{\beta,m} w(s)\,\mathrm{d}s.
\]

Indeed,
\begin{align*}
&\int_{\mathbb R} K_0(t,s)\,Q_{\beta,m} w(s)\,\mathrm{d}s\\
&=\frac{e^{h_\beta(t)-mt}}{2m}\int_{-\infty}^t e^{ms}\bigl(-\partial_s^2+m^2\bigr)f(s)\,\mathrm{d}s
+\frac{e^{h_\beta(t)+mt}}{2m}\int_{t}^{+\infty} e^{-ms}\bigl(-\partial_s^2+m^2\bigr)f(s)\,\mathrm{d}s\\
&=e^{h_\beta(t)}f(t)=w(t),
\end{align*}
where
\[
f(s)=e^{-h_\beta(s)}w(s).
\]

On the other hand, it is not difficult to see that
\[
\int_{\mathbb R} e^{\pm ms}\bigl(-\partial_s^2+m^2\bigr)f(s)\,\mathrm{d}s = 0,
\]
that is,
\[
\int_{\mathbb R} e^{-h_\beta(s)\pm ms}\, Q_{\beta,m} w(s)\,\mathrm{d}s = 0.
\]

Thus we obtain
\[
K_R(t,s)=K_0(t,s)-\frac{1}{2m}e^{h_\beta(t)-mt-h_\beta(s)+ms},
\]
\[
K_L(t,s)=K_0(t,s)-\frac{1}{2m}e^{h_\beta(t)+mt-h_\beta(s)-ms},
\]
both of which are left inverses for \(Q_{\beta,m}\).
More precisely,
\[
K_R(t,s)=
\begin{cases}
0, & s<t,\\[4pt]
-m^{-1}e^{h_\beta(t)-h_\beta(s)}\sinh\bigl(m(s-t)\bigr), & s>t,
\end{cases}
\]
\[
K_L(t,s)=
\begin{cases}
m^{-1}e^{h_\beta(t)-h_\beta(s)}\sinh\bigl(m(s-t)\bigr), & s<t,\\[4pt]
0, & s>t.
\end{cases}
\]

We now select a suitable left inverse kernel to establish estimates \eqref{eq:5.9}
and \eqref{eq:5.10}.

\medskip
\noindent\textbf{Case 1 $m\ge \sqrt{\beta}$.}
Set \(\delta=|t-s|\), \(u=\frac{t+s}{2}\). We first record a simple auxiliary
inequality: if \(A\ge \frac{\beta \delta}{2}\) and \(0<c_0<\frac13\), then
\begin{equation}
e^{-A\delta}\le e^{-c_0\beta\delta^2}e^{-c_0 A\delta},
\label{eq:basic_ineq}
\end{equation}
since \((1-c_0)A\ge c_0\beta\delta\).

We choose the left inverse according to three regimes for \(t\):
\begin{itemize}
\item (Regime A) When \(t<-\dfrac{m}{\beta}\), take \(K_L\) as the left inverse of \(Q_{\beta,m}\).
\item (Regime B) When \(|t|\le \dfrac{m}{\beta}\), take \(K_0\) as the left inverse of \(Q_{\beta,m}\).
\item (Regime C) When \(t>\dfrac{m}{\beta}\), take \(K_R\) as the left inverse of \(Q_{\beta,m}\).
\end{itemize}

\medskip
\noindent\textbf{(Regime B): Case $|t|\le \dfrac{m}{\beta}$.}

\smallskip
\noindent\textit{(Case B1): $t>s$.}
\begin{equation}
\begin{aligned}
h_\beta(t)-h_\beta(s)-m|t-s|
&=(\beta u-m)(t-s)\\
&=-(m-\beta u)\delta,
\end{aligned}
\label{eq:case_B1_tgreater_s}
\end{equation}
Note that
\begin{equation}
m-\beta u
=m-\beta t+\frac{\beta\delta}{2}
\ge \frac{\beta\delta}{2},
\label{eq:lower_m_beta_u_B1}
\end{equation}
Hence, applying \eqref{eq:basic_ineq},
\begin{equation}
\begin{aligned}
K_0=\frac{1}{2m}e^{-(m-\beta u)\delta}
&\le \frac{1}{2m}e^{-c_0(m-\beta u)\delta}e^{-c_0\beta\delta^2}\\
&=\frac{1}{2m}e^{-c_0|m-\beta u|\delta}e^{-c_0\beta\delta^2}.
\end{aligned}
\label{eq:K0_est_B1}
\end{equation}

\smallskip
\noindent\textit{(Case B2): $s>t$.}
\begin{equation}
\begin{aligned}
h_\beta(t)-h_\beta(s)-m|t-s|
&=-(\beta u+m)(s-t)\\
&=-(m+\beta u)\delta,
\end{aligned}
\label{eq:case_B2_sgreater_t}
\end{equation}
Observe that
\begin{equation}
m+\beta u
=m+\beta\frac{t+s}{2}
=m+\beta t+\frac{\beta\delta}{2}
\ge \frac{\beta\delta}{2},
\label{eq:lower_m_plus_beta_u_B2}
\end{equation}
Thus, by \eqref{eq:basic_ineq},
\begin{equation}
K_0=\frac{1}{2m}e^{-(m+\beta u)\delta}
\le \frac{1}{2m}e^{-c_0|m+\beta u|\delta}e^{-c_0\beta\delta^2},
\label{eq:K0_est_B2}
\end{equation}
Recall that \(m\ge\sqrt{\beta}\). Combining Cases B1 and B2,
\begin{equation}
|K_0|\le \frac{C}{m+\sqrt{\beta}}e^{-c_0\beta\delta^2}
\Bigl(e^{-c_0|m-\beta u|\delta}+e^{-c_0|m+\beta u|\delta}\Bigr).
\label{eq:K0_final_B}
\end{equation}

\medskip
\noindent\textbf{(Regime A): Case $t<-\dfrac{m}{\beta}$, $s<t$.}
We use the left-inverse kernel \(K_L\).
\begin{equation}
\begin{aligned}
K_L&=\frac{1}{2m}\Bigl(e^{h_\beta(t)-h_\beta(s)-mt+ms}-e^{h_\beta(t)-h_\beta(s)+mt-ms}\Bigr)\\
&=\frac{1}{2m}\Bigl(e^{\beta u(t-s)-m(t-s)}-e^{\beta u(t-s)+m(t-s)}\Bigr)\\
&=\frac{1}{2m}\Bigl(e^{-(m-\beta u)\delta}-e^{-(-m-\beta u)\delta}\Bigr)
=\frac1{2m}\Bigl(e^{-|m-\beta u|\delta}-e^{-|m+\beta u|\delta}\Bigr).
\end{aligned}
\label{eq:KL_def_A}
\end{equation}

Since \(s<t\), we have \(u=t-\frac{\delta}{2}<-\frac{m}{\beta}-\frac{\delta}{2}\), so
\(\beta u+m<-\frac{\beta\delta}{2}\). Hence \(-\beta u-m>\frac{\beta\delta}{2}>0\),
and \(m-\beta u>-\beta u-m>0\). Therefore \(m-\beta u=|m-\beta u|\),
\(-m-\beta u=|m+\beta u|\), and
\begin{equation}
|m-\beta u|=m-\beta u> -m-\beta u=|m+\beta u|\ge \frac{\beta\delta}{2}.
\label{eq:abs_estimate_A}
\end{equation}

Applying inequality \eqref{eq:basic_ineq} to both exponential terms,
\begin{equation}
|K_L|\lesssim \frac{C}{m+\sqrt{\beta}}e^{-c_0\beta\delta^2}
\Bigl(e^{-c_0|m+\beta u|\delta}+e^{-c_0|m-\beta u|\delta}\Bigr).
\label{eq:KL_est_A}
\end{equation}

\medskip
\noindent\textbf{(Regime C): Case $t>\dfrac{m}{\beta}$, $s>t$.}
We use the left-inverse kernel \(K_R\).

For \(s>t\), \(u=t+\frac{\delta}{2}>\frac{m}{\beta}+\frac{\delta}{2}\), so
\(\beta u-m>\frac{\beta\delta}{2}>0\).

We have
\begin{equation}
\frac{\beta\delta}{2}<|m-\beta u|=\beta u-m<m+\beta u=|m+\beta u|.
\label{eq:abs_estimate_C}
\end{equation}
Thus
\begin{equation}
\begin{aligned}
K_R&=\frac1{2m}\Bigl(e^{h_\beta(t)+mt-h_\beta(s)-ms}-e^{h_\beta(t)-mt-h_\beta(s)+ms}\Bigr)\\
&=\frac1{2m}\Bigl(e^{\beta u(t-s)+m(t-s)}-e^{\beta u(t-s)-m(t-s)}\Bigr)\\
&=\frac1{2m}\Bigl(e^{-(m+\beta u)(s-t)}-e^{-(\beta u-m)(s-t)}\Bigr)
=\frac1{2m}\Bigl(e^{-|m+\beta u|\delta}-e^{-|m-\beta u|\delta}\Bigr).
\end{aligned}
\label{eq:KR_def_C}
\end{equation}

Applying \eqref{eq:basic_ineq} once more,
\begin{equation}
|K_R|\lesssim \frac{C\,e^{-c_0\beta\delta^2}}{m+\sqrt{\beta}}
\Bigl(e^{-c_0|m+\beta u|\delta}+e^{-c_0|m-\beta u|\delta}\Bigr).
\label{eq:KR_est_C}
\end{equation}

\medskip
Collecting Regimes A, B and C, for \(m\ge\sqrt{\beta}\) we obtain a measurable
left-inverse kernel \(K_{\beta,m}\) satisfying
\begin{equation}
|K_{\beta,m}(t,s)|\le \frac{C}{\sqrt{\beta}+m}e^{-c\beta\delta^2}
\Bigl(e^{-c|m-\beta u|\delta}+e^{-c|m+\beta u|\delta}\Bigr),
\label{eq:final_kernel_est}
\end{equation}
which is exactly estimate \eqref{eq:5.10}.

\medskip
\noindent\textbf{Case 2 $0<m\le \sqrt{\beta}$.}

\smallskip
\noindent\textit{(Case 2.1): $t\ge 0$.}
We select \(K_R\), which vanishes for \(s<t\) and is non-zero only when \(s>t\).
Set
\[
x=\sqrt{\beta}\,t\ge 0,\quad \rho=\sqrt{\beta}(s-t)>0,\quad \nu=\frac{m}{\sqrt{\beta}}\in(0,1),
\]
We compute
\[
h_\beta(t)-h_\beta(s)=\beta\Bigl(\frac{t^2-s^2}{2}\Bigr)
=-x\rho-\frac{\rho^2}{2},
\]
Then
\begin{equation}
\begin{aligned}
|K_R|&= m^{-1} e^{h_\beta(t)-h_\beta(s)} \sinh\big(m(s-t)\big) \\
&=\beta^{-\frac12} e^{-x\rho-\frac{\rho^2}{2}} \beta^{\frac12}m^{-1}\sinh(\nu\rho) \\
&=\beta^{-\frac12} e^{-x\rho-\frac{\rho^2}{2}}\frac{\sinh(\nu\rho)}{\nu\rho}\ \rho,
\end{aligned}
\label{eq:KR_lowm_def}
\end{equation}
Recall the elementary inequality
\[
\frac{\sinh x}{x}\le e^{x},\quad \forall x>0,
\]
Hence
\[
|K_R|\le \beta^{-\frac12}\rho\, e^{-x\rho-\frac{\rho^2}{2}+\nu\rho},
\]
Observe the bound
\[
\nu\rho\le \frac{\rho^2}{4}+\nu^2 \le \frac{\rho^2}{4}+1,
\]
Substituting this estimate,
\begin{equation}
\begin{aligned}
|K_R|&\le \beta^{-\frac12}\rho\, e^{-x\rho-\frac{\rho^2}{4}+1}
\le C\,\beta^{-\frac12}\rho\, e^{-\frac{\rho^2}{4}}
\le C\,\beta^{-\frac12} e^{-\frac{\rho^2}{8}},
\end{aligned}
\label{eq:KR_lowm_est}
\end{equation}
Since \(\rho=\sqrt{\beta}\delta\), we have \(e^{-\rho^2/8}=e^{-\beta\delta^2/8}\). Therefore
\begin{equation}
|K_R|\le\frac{C}{m+\sqrt{\beta}}\,e^{-c\beta\delta^2}.
\label{eq:KR_lowm_final}
\end{equation}

\smallskip
\noindent\textit{(Case 2.2): $t< 0$.}
We take \(K_L\), which vanishes for \(s>t\) and is non-zero only for \(s<t\).
Set
\[
x=-\sqrt{\beta}\,t\ge 0,\quad \rho=\sqrt{\beta}(t-s)>0,\quad \nu=\frac{m}{\sqrt{\beta}}\in(0,1).
\]

Using the explicit form of \(K_L\),
\begin{equation}
\begin{aligned}
|K_L|&=\Bigl| m^{-1} e^{h_\beta(t)-h_\beta(s)} \sinh\big(m(t-s)\big) \Bigr| \\
&=\Bigl| m^{-1} e^{-x\rho-\frac{\rho^2}{2}} \sinh(\nu\rho) \Bigr|
=\Bigl| m^{-1} e^{-x\rho-\frac{\rho^2}{2}}  \nu\rho \frac{\sinh(\nu\rho)}{\nu\rho}\Bigr| \\
&=\Bigl| \beta^{-1/2} \rho\, e^{-x\rho-\frac{\rho^2}{2}}  \frac{\sinh(\nu\rho)}{\nu\rho}\Bigr|
\le \beta^{-1/2}\rho\, e^{-x\rho-\frac{\rho^2}{2}+\nu\rho}\\
&\le \beta^{-1/2}\rho\, e^{-x\rho-\frac{\rho^2}{4}+1}
\le C\beta^{-1/2} e^{-\rho^2/8}
\lesssim \frac{C}{m+\sqrt{\beta}}\, e^{-c\beta\delta^2}.
\end{aligned}
\label{eq:KL_lowm_est}
\end{equation}

Combining Case 2.1 and Case 2.2, we obtain the kernel bound \eqref{eq:5.9} for
\(0<m\le\sqrt{\beta}\). This completes the proof of Lemma \ref{lem:kernel}.
\end{proof}

\subsection{Proof of Lemma~\ref{lem:bandsum}}

\begin{proof}
We treat the two sums separately.

\medskip
\noindent\textbf{First term: $\displaystyle e^{-c\beta\delta^2}\beta^{-1/2}\sum_{0\le j\le 2\sqrt{\beta}}(1+j)^{a_0}$.}

Since \(j\le 2\sqrt{\beta}\), we have \((1+j)^{a_0}\le C(1+2\sqrt{\beta})^{a_0}\). The
number of summands is \(O(\sqrt{\beta})\), so
\[
\beta^{-1/2}\sum_{0\le j\le 2\sqrt{\beta}}(1+j)^{a_0}
\le C\beta^{-1/2}\cdot \sqrt{\beta}\,(2\sqrt{\beta})^{a_0}
= C\beta^{\frac{a_0}{2}}.
\]

Thus
\[
e^{-c\beta\delta^2}\beta^{-1/2}\sum_{0\le j\le 2\sqrt{\beta}}(1+j)^{a_0}
\le C\, e^{-c\beta\delta^2}\beta^{\frac{a_0}{2}}.
\]

Set \(y=\sqrt{\beta}\delta\), so \(\beta^{\frac{a_0}{2}}=y^{a_0}\delta^{-a_0}\). Then
\[
e^{-c\beta\delta^2}\beta^{\frac{a_0}{2}}= e^{-c y^2} y^{a_0} \,\delta^{-a_0}.
\]

Because \(\sup_{y\ge 0} y^{a_0} e^{-c y^2}<+\infty\), we obtain
\[
e^{-c\beta\delta^2}\beta^{-1/2}\sum_{0\le j\le 2\sqrt{\beta}}(1+j)^{a_0}
\le C\delta^{-a_0}.
\]

\medskip
\noindent\textbf{Second term: $\displaystyle e^{-c\beta\delta^2}\sum_{j>2\sqrt{\beta}} j^{-\gamma}e^{-c|j-M|\delta}$.}

We split into two cases: \(\delta\ge 1\) and \(0<\delta<1\).

\smallskip
\noindent\textit{Case 1: $\delta\ge 1$.}
\begin{align*}
e^{-c\beta\delta^2}\sum_{j>2\sqrt{\beta}} j^{-\gamma}e^{-c|j-M|\delta}
&\le e^{-c\beta\delta^2}\sum_{j=0}^{+\infty}e^{-c|j-M|\delta}\\
&\le e^{-c\beta\delta^2}\Bigl(\sum_{j=0}^{\lfloor M\rfloor}e^{-c(M-j)\delta}
+\sum_{j=\lfloor M\rfloor+1}^{+\infty}e^{-c(j-M)\delta}\Bigr)\\
&\le C\delta^{-a_0},
\end{align*}
where we used \(a_0=1-\gamma>0\), hence \(\delta^{-1}\le \delta^{-a_0}\) for
\(\delta\ge 1\).

\smallskip
\noindent\textit{Case 2: $0<\delta<1$.}
We further subdivide according to the size of \(M\).

\paragraph{(i) $M\le \dfrac{2}{\delta}$.}
Split the sum at \(j=\dfrac{4}{\delta}\):
\begin{align*}
\sum_{j>2\sqrt{\beta}} j^{-\gamma}e^{-c|j-M|\delta}
=\sum_{\substack{j>2\sqrt{\beta}\\j\le \frac4\delta}} j^{-\gamma}e^{-c|j-M|\delta}
+\sum_{j>\frac4\delta} j^{-\gamma}e^{-c|j-M|\delta}.
\end{align*}

\[
\sum_{\substack{j>2\sqrt{\beta}\\j\le \frac4\delta}} j^{-\gamma}e^{-c|j-M|\delta}
\lesssim \sum_{\substack{j>2\sqrt{\beta}\\j\le \frac4\delta}} j^{-\gamma}
\lesssim \delta^{\gamma-1}=\delta^{-a_0}.
\]

For the second sub-sum, \(j>\frac4\delta\) and \(M\le \frac2\delta\), so
\(|j-M|\ge j-M\ge \frac j2\):
\[
\sum_{j>\frac4\delta} j^{-\gamma}e^{-c|j-M|\delta}
\le \sum_{j>\frac4\delta}\Bigl(\frac4\delta\Bigr)^{-\gamma} e^{-c\frac12 j\delta}
\lesssim \delta^{\gamma}\sum_{j>\frac4\delta}e^{-\frac c2 j\delta}
\lesssim \delta^{\gamma}\cdot \delta^{-1}=\delta^{-a_0}.
\]

Multiplying by \(e^{-c\beta\delta^2}\le 1\), we get
\[
e^{-c\beta\delta^2}\sum_{j>2\sqrt{\beta}} j^{-\gamma}e^{-c|j-M|\delta}\le C\delta^{-a_0}.
\]

\paragraph{(ii) $M>\dfrac{2}{\delta}$.}
We partition the summation range \(j>2\sqrt{\beta}\) into three parts:
\[
\sum_{j>2\sqrt{\beta}}=\sum_{\substack{j>2\sqrt{\beta}\\j<\frac M2}}
+\sum_{\substack{j>2\sqrt{\beta}\\|j-M|\le \frac M2}}
+\sum_{\substack{j>2\sqrt{\beta}\\j>\frac32 M}}.
\]

 Part 1: \(2\sqrt{\beta}<j<\frac M2\). Here \(|j-M|\ge \frac M2\), so
\begin{align*}
e^{-c\beta\delta^2}\sum_{\substack{j>2\sqrt{\beta}\\j<\frac M2}} j^{-\gamma}e^{-c|j-M|\delta}
&\lesssim e^{-c\beta\delta^2}e^{-c\frac M2\delta}\sum_{\substack{j>2\sqrt{\beta}\\j<\frac M2}} j^{-\gamma}\\
&\lesssim e^{-c\beta\delta^2}e^{-c\frac M2\delta} M^{1-\gamma}\\
&\lesssim e^{-c\beta\delta^2}\,\delta M\, e^{-\frac c2 M\delta} M^{-\gamma}\delta^{-1}\\
&\lesssim \delta^{-a_0}.
\end{align*}

 Part 2: \(|j-M|\le \frac M2\). In this region \(j\simeq M\), so \(j^{-\gamma}\lesssim M^{-\gamma}\):
\begin{align*}
e^{-c\beta\delta^2}\sum_{\substack{j>2\sqrt{\beta}\\|j-M|\le \frac M2}} j^{-\gamma}e^{-c|j-M|\delta}
&\lesssim e^{-c\beta\delta^2}M^{-\gamma}
\biggl(\sum_{\frac M2\le j\le M}e^{-c(M-j)\delta}
+\sum_{M<j\le \frac32 M}e^{-c(j-M)\delta}\biggr)\\
&\lesssim e^{-c\beta\delta^2}M^{-\gamma} (1+\delta)^{-1},
\end{align*}
since \(M>\frac2\delta\), \(M^{-\gamma}\lesssim \delta^{\gamma}\), hence
\[
e^{-c\beta\delta^2}M^{-\gamma}(1+\delta)^{-1}
\lesssim e^{-c\beta\delta^2}\delta^{\gamma}(1+\delta)^{-1}
\le C\delta^{-a_0}.
\]

 Part 3: \(j>\frac32 M\). Then \(|j-M|\ge \frac j3\), so
\begin{align*}
e^{-c\beta\delta^2}\sum_{\substack{j>2\sqrt{\beta}\\j>\frac32 M}} j^{-\gamma}e^{-c|j-M|\delta}
&\lesssim e^{-c\beta\delta^2}\sum_{j>\frac32 M} j^{-\gamma}e^{-c\frac13 j\delta}\\
&\lesssim e^{-c\beta\delta^2} M^{-\gamma}\sum_{j>\frac32 M} e^{-\frac c3 j\delta}\\
&\lesssim e^{-c\beta\delta^2} M^{-\gamma}\cdot \delta^{-1},
\end{align*}
again \(M>\frac2\delta\) yields \(M^{-\gamma}\lesssim \delta^{\gamma}\), so
\[
e^{-c\beta\delta^2} M^{-\gamma}\delta^{-1}
\lesssim e^{-c\beta\delta^2}\delta^{\gamma}\delta^{-1}
\le C\delta^{-a_0}.
\]

Combining the three sub-parts for \(M>\frac2\delta\), we obtain
\[
e^{-c\beta\delta^2}\sum_{j>2\sqrt{\beta}} j^{-\gamma}e^{-c|j-M|\delta}\le C\delta^{-a_0}.
\]

\medskip

We point out that when estimating sums by integrals, the two quantities may differ
by finitely many boundary terms. These boundary terms are uniformly bounded
constants. After multiplying by the factor \(e^{-c\beta\delta^2}\), they are
trivially dominated by \(C\delta^{-a_0}\).

Putting the estimates for the first term and the second term together, inequality
\eqref{eq:sum} holds uniformly in \(\beta\ge 1\), \(M\ge 0\), \(\delta>0\).
\end{proof}

\section{Applications of the Carleman Estimates}
\label{sec:uc_application}
In this section, we present an application of our Carleman estimate. First we take our compact, connected manifold $Y$ without boundary to be $\mathbb{S}^{d-1}$, which yields a Carleman estimate on $\mathbb R^d$.
\begin{equation}\label{eq:E}
\lVert e^{\tau\phi}u\rVert_{L^{q}(\mathbb R^d\setminus\{0\})}
\le C\,\lVert e^{\tau\phi}H_{A,V}u\rVert_{L^{p}(\mathbb R^d\setminus\{0\})},
\qquad u\in C_c^\infty(\mathbb R^d\setminus\{0\}),
\end{equation}

Before proving Theorem~\ref{prob:5.2}, we first present a scaled version of our Carleman estimate.
Such a Carleman estimate allows us to perform unique continuation in multiple directions.
\subsection{Scale invariance of the estimate}

\begin{lemma}\label{lemma5.3}
For every $R>0$ the estimate
\begin{equation}\label{eq:Carleman-R}
\bigl\|e^{\tau \phi_R}u\bigr\|_{L^q(\mathbb R^d)}
\le C\bigl\|e^{\tau \phi_R}H_{A,V}u\bigr\|_{L^p(\mathbb R^d)},
\qquad u\in C_c^\infty(\mathbb R^d\setminus\{0\}),
\end{equation}
holds for all $\tau\ge \tau_0$ with the same constant $C$, where
\begin{equation}\label{eq:phi-R}
\phi_R(x)=\frac14\bigl(\log(|x|/R)\bigr)^2.
\end{equation}
\begin{proof}

The proof of this lemma is straightforward and follows immediately from a scaling transformation applied to equation \eqref{eq:E}.

For $g(y)=f(Ry)$ the homogeneity of \eqref{eq:A-form} gives
$A(Ry)=R^{-1}A(y)$ and $V(Ry)=R^{-2}V(y)$, hence
\begin{equation}\label{eq:homog}
  (H_{A,V}g)(y)=R^{2}(H_{A,V}f)(Ry) .
\end{equation}
Applying \eqref{eq:E} to $g$ and changing variables $x=Ry$ yields
\[
  R^{-d/q}\norm{e^{\tau\phi_R}f}_{L^{q}_x}
  \le C R^{2-d/p}\norm{e^{\tau\phi_R}H_{A,V}f}_{L^{p}_x} .
\]
By \eqref{def:qp-P},
$d/q=(d-2)/2$ and $2-d/p=2-(d+2)/2=-(d-2)/2=-d/q$, so the powers of $R$ on
the two sides coincide and cancel.  This is \eqref{eq:Carleman-R}.  The
cancellation is precisely the reason the exponents in \eqref{def:qp-P} are
the scale-invariant ones.
\end{proof}
\end{lemma}

\subsection{Unique continuation across the unit sphere}
\begin{proof}[Proof of Theorem~\ref{prob:5.2}]
First, assume $\chi\in C_c^\infty(\mathbb R^d\setminus\{0\})$ and 
$f\in W^{2,p}_{\mathrm{loc}}(\mathbb R^d)$.
We now verify that $\chi f$ satisfies \eqref{eq:Carleman-R}. 
It suffices to check that
$e^{\tau\phi_R}\chi f\in L^q(\mathbb R^d)$ and 
$e^{\tau\phi_R}H_{A,V}(\chi f)\in L^p(\mathbb R^d)$.
The first inclusion is trivial. Compute
\begin{equation}
H_{A,V}(\chi f)
=\chi H_{A,V}f + [H_{A,V},\chi]f \\
= 2i\nabla\chi\cdot(i\nabla+A)f - (\Delta\chi)f.
\label{eq:commutator_cutoff}
\end{equation}

Under our assumptions, one readily has 
$e^{\tau\phi_R}H_{A,V}(\chi f)\in L^p(\mathbb R^d)$.
A density argument then shows that $\chi f$ satisfies 
condition \eqref{eq:Carleman-R}.

\smallskip
\noindent\textbf{Step 1 (extend outward): 
$a<|x|<b\ \Rightarrow\ b<|x|<b+\delta$.}
We next prove that $f=0$ on $a<|x|<b$ implies 
$f=0$ on $b<|x|<b+\delta$.
Choose radii $r_1,r_2,r_3$ such that 
$a<r_1<b<b+\delta<r_2<r_3$.
Take a cutoff function $\chi\in C_c^\infty(\mathbb R^d\setminus\{0\})$ 
as illustrated in Figure~\ref{fig:annular_exterior},
\[
\chi(x)=
\begin{cases}
0, & |x|>r_3,\\
1, & b<|x|<r_2,\\
0, & |x|<r_1,
\end{cases}
\]
and define
\[
S_1=\{x\colon b<|x|<b+\delta\},\qquad
S_2=\{x\colon r_2<|x|<r_3\}.
\]

\begin{figure}[H]
\centering
\resizebox{0.55\linewidth}{!}{%
  \begin{tikzpicture}
    \fill (0,0) circle (1.2pt);
    \fill[gray!25, even odd rule] (0,0) circle (4.0) (0,0) circle (3.0);   
    \fill[gray!25, even odd rule] (0,0) circle (5.6) (0,0) circle (4.8);   
    \draw (0,0) circle (1.0);   
    \draw (0,0) circle (2.0);   
    \draw (0,0) circle (3.0);   
    \draw (0,0) circle (4.0);   
    \draw (0,0) circle (4.8);   
    \draw (0,0) circle (5.6);   
    \node at (-40:1.2) {$a$};
    \node at (-40:2.3) {$r_1$};
    \node at (-40:3.1) {$b$};
    \node at (-40:4.1) {$b+\delta$};
    \node at (-40:4.9) {$r_2$};
    \node at (-40:5.7) {$r_3$};
    \node at (0:3.5)   {$S_1$};
    \node at (0:5.2)   {$S_2$};
  \end{tikzpicture}%
}
\caption{Unique continuation from the exterior (Step 1)}
\label{fig:annular_exterior}
\end{figure}

From the Carleman inequality we obtain
\[
\|e^{\tau\phi_R}\chi f\|_{L^q(\mathbb R^d)}
\le C\|e^{\tau\phi_R}H_{A,V}(\chi f)\|_{L^p(\mathbb R^d)}.
\]
Note that $\operatorname{supp}\chi\subset\{x\colon r_1\le|x|\le r_3\}$.
Since $f=0$ for $a<|x|<b$, we have
\[
\operatorname{supp}(\chi f)\subset\{x\colon b\le|x|\le r_3\}.
\]
Moreover,
\[
H_{A,V}(\chi f)=\chi H_{A,V}f + [H_{A,V},\chi]f = [H_{A,V},\chi]f.
\]
Hence $\operatorname{supp}\,H_{A,V}(\chi f)\subset\{x\colon r_2<|x|<r_3\}$, and we get
\[
\|e^{\tau\phi_R}f\|_{L^q(S_1)}
\le C\bigl\|e^{\tau\phi_R}[H_{A,V},\chi]f\bigr\|_{L^p(S_2)}.
\]
Recall that $\phi_R=\frac14\bigl(\log\frac{|x|}{R}\bigr)^2$, which is decreasing as $|x|$ increases when $0<|x|<R$.
Take $R>r_3$. Then
\[
e^{\tau\phi_R(b+\delta)}\|f\|_{L^q(S_1)}
\le C\,e^{\tau\phi_R(r_2)}\bigl\|[H_{A,V},\chi]f\bigr\|_{L^p(S_2)}.
\]
By hypothesis $[H_{A,V},\chi]f\in L^p(S_2)$. Letting $\tau\to\infty$,
\[
\|f\|_{L^q(S_1)}=0 \implies f=0 \quad\text{a.e. in }S_1.
\]
This proves $f\equiv 0$ on $b<|x|<b+\delta$.

\medskip
\noindent\textbf{Step 2 (extend inward): $a-\delta<|x|<b$.}
We now prove that $f=0$ on $a-\delta<|x|<b$.
Choose radii $r_6<r_5<a-\delta<a<r_4<b$, and a radial cutoff function as illustrated in
Figure~\ref{fig:annular_interior},
\[
\chi(x)=
\begin{cases}
0, & |x|>r_4,\\
1, & r_5<|x|<a,\\
0, & |x|<r_6,
\end{cases}
\]
and define
\[
S_3=\{x\colon a-\delta<|x|<a\},\qquad
S_4=\{x\colon r_6<|x|<r_5\}.
\]

\begin{figure}[H]
\centering
\resizebox{0.55\linewidth}{!}{%
  \begin{tikzpicture}
    \fill (0,0) circle (1.2pt);
    \fill[gray!25, even odd rule] (0,0) circle (2.8) (0,0) circle (2.0);   
    \fill[gray!25, even odd rule] (0,0) circle (1.2) (0,0) circle (0.6);   
    \draw (0,0) circle (0.6);   
    \draw (0,0) circle (1.2);   
    \draw (0,0) circle (2.0);   
    \draw (0,0) circle (2.8);   
    \draw (0,0) circle (3.7);   
    \draw (0,0) circle (4.7);   
    \node at (-40:0.8)  {$r_6$};
    \node at (-40:1.4)  {$r_5$};
    \node at (-40:2.2)  {$a-\delta$};
    \node at (-40:3.1)  {$a$};
    \node at (-40:3.9)  {$r_4$};
    \node at (-40:5.0)  {$b$};
    \node at (0:0.95)  {$S_4$};
    \node at (0:2.5)  {$S_3$};
  \end{tikzpicture}%
}
\caption{Unique continuation from the interior (Step 2)}
\label{fig:annular_interior}
\end{figure}

By the same argument as before, we obtain
\[
\|e^{\tau\phi_R} f\|_{L^q(S_3)}
\le C\bigl\|e^{\tau\phi_R}[H_{A,V},\chi]f\bigr\|_{L^p(S_4)}.
\]
Recall that $\phi_R=\frac14\bigl(\log\frac{|x|}{R}\bigr)^2$.
Take $R<r_6$. Then $\phi_R(x)$ is decreasing as $|x|$ decreases for $|x|>R$. Consequently,
\[
e^{\tau\phi_R(a-\delta)}\|f\|_{L^q(S_3)}
\le C\,e^{\tau\phi_R(r_5)}\bigl\|[H_{A,V},\chi]f\bigr\|_{L^p(S_4)}.
\]
Letting $\tau\to\infty$, we get $\|f\|_{L^q(S_3)}=0$, hence $f=0$ a.e.\ in $S_3$.
This proves $f=0$ on $a-\delta<|x|<b$.

This means that the vanishing condition $f=0$ on $D_{a,b}$ can be extended to arbitrary annuli.

We now prove that if $f=0$ a.e.\ on every annulus $0<\frac{1}{R}<|x|<R<+\infty$, then $f=0$ a.e.\ in $\mathbb R^d$.

Set
\[
E \overset{\mathrm{def}}{=} \bigl\{x\in\mathbb R^d\colon f(x)\neq 0\bigr\}.
\]
Suppose for contradiction that $m(E)\neq 0$. Then
\[
0<m(E)\le m\bigl(E\cap B_{\frac1R}\bigr)
+ m\bigl(E\cap (B_{\frac1R}^c\cap B_R)\bigr)
+ m\bigl(E\cap B_R^c\bigr).
\]

Since $f=0$ a.e.\ on every annulus $0<\frac{1}{R}<|x|<R<+\infty$, we have
\[
m\bigl(E\cap (B_{\frac1R}^c\cap B_R)\bigr)=0 \quad \text{for all } R>0.
\]

For sufficiently large $R$,
\[
m\bigl(E\cap B_{\frac1R}\bigr)<\frac{m(E)}{2},
\]
which implies
\[
m\bigl(E\cap B_R^c\bigr)\ge \frac{m(E)}{2}.
\]

Observe that
\[
\begin{aligned}
m\bigl(E\cap B_R^c\bigr)
&= m\left(E\cap \bigcup_{j=R}^{\infty}\big(B_{j+1}\setminus B_j\big)\right)\\
&= m\left(\bigcup_{j=R}^{\infty} \Big(E\cap \big(B_{j+1}\setminus B_j\big)\Big)\right).
\end{aligned}
\]

There must exist some $j_0$ such that $m\big(E\cap (B_{j_0+1}\setminus B_{j_0})\big)>0$.
If not, then
\[
m\left(\bigcup_{j=R}^{\infty} \Big(E\cap \big(B_{j+1}\setminus B_j\big)\Big)\right)
\le \sum_{j=R}^{\infty} m\big(E\cap (B_{j+1}\setminus B_j)\big)=0.
\]

This contradicts
\[
m\left(\bigcup_{j=R}^{\infty} \Big(E\cap \big(B_{j+1}\setminus B_j\big)\Big)\right)\ge \frac{m(E)}{2}.
\]

However, $m\big(E\cap (B_{j_0+1}\setminus B_{j_0})\big)>0$ contradicts our hypothesis.
Hence $m(E)=0$, which completes the proof.
\end{proof}

Thus we have proved Theorem \ref{prob:5.2}. It can be seen from the proof that Lemma \ref{lemma5.3} is very useful, since it allows us to construct suitable weight functions when performing continuation in different directions. Moreover, if $f$ satisfies the assumptions of Theorem \ref{prob:5.2}, then one can conclude that $f=0$ by means of the connectedness argument.

We point out that Lemma \ref{lemma5.3} relies on our choice of exponents. This cannot be achieved for general $L^2$-type Carleman estimates. Our scaled Carleman estimate enables us to cross the unit sphere and construct a locally monotonic weight function $\phi$ for unique continuation, whereas ordinary $L^2$-type Carleman estimates cannot easily perform continuation across the unit sphere.

\subsection{Strong Unique Continuation Property}

\begin{proof}[Proof of Theorem~\ref{theorem1.3}:]

First, let $\rho\in C^\infty(\mathbb R^d)$ be a cut-off function satisfying
\[
\rho=
\begin{cases}
0, & |x|<\tfrac12,\\
1, & |x|>1,
\end{cases}
\qquad 0\le \rho\le 1.
\]
Let $\chi\in C_0^\infty(\mathbb R^d)$ satisfy
\[
\chi=
\begin{cases}
1, & |x|<R_0,\\
0, & |x|>2R_0,
\end{cases}
\]
where $R_0>0$ is taken sufficiently small.

Define
\[
u_j \overset{\mathrm{def}}{=} \rho(jx)u=\rho(jx)\chi f.
\]
Arguing as in the proof of Theorem \ref{prob:5.2}, $u_j$ satisfies the hypotheses required to apply estimate \eqref{eq:E}. Consequently,
\[
\bigl\|e^{\tau\phi} u_j\bigr\|_{L^q(\mathbb R^d)}
\le C\bigl\|e^{\tau\phi} H_{A,V} u_j\bigr\|_{L^p(\mathbb R^d)}.
\]
By definition of the commutator, this gives
\[
\bigl\|e^{\tau\phi} u_j\bigr\|_{L^q(\mathbb R^d)}
\le \bigl\|e^{\tau\phi}\bigl[H_{A,V},\rho(jx)\bigr]u\bigr\|_{L^p(\mathbb R^d)}
+\bigl\|e^{\tau\phi}\rho(jx)\bigl[H_{A,V},\chi\bigr]f\bigr\|_{L^p(\mathbb R^d)}.
\]

Choose $j$ sufficiently large so that $jR_0>1$. The support of the first term on the right-hand side is contained in $\bigl\{x:\tfrac1{2j}<|x|<\tfrac1j\bigr\}$. Since $jR_0>1$, we have $|x|<\tfrac1j<R_0$, hence $\chi=1$ on this set. Therefore
\[
\bigl\|e^{\tau\phi}\bigl[H_{A,V},\rho(jx)\bigr]u\bigr\|_{L^p(\mathbb R^d)}
=\bigl\|e^{\tau\phi}\bigl[H_{A,V},\rho(jx)\bigr]f\bigr\|_{L^p\left(\frac1{2j}<|x|<\frac1j\right)}.
\]

The support of the second term lies in $\{x:R_0<|x|<2R_0\}$. For $|jx|>jR_0>1$, we have $\rho(jx)=1$, so
\[
\bigl\|e^{\tau\phi}\rho(jx)\bigl[H_{A,V},\chi\bigr]f\bigr\|_{L^p(\mathbb R^d)}
=\bigl\|e^{\tau\phi}\bigl[H_{A,V},\chi\bigr]f\bigr\|_{L^p(R_0<|x|<2R_0)}.
\]

Take $R_1<R_0$. Restricting the left-hand side to $B_{R_1}$, we obtain
\[
\begin{aligned}
\bigl\|e^{\tau\phi}\rho(jx)f\bigr\|_{L^q(B_{R_1})}
&\le \bigl\|e^{\tau\phi}\bigl[H_{A,V},\rho(jx)\bigr]f\bigr\|_{L^p\left(\frac1{2j}<|x|<\frac1j\right)}\\
&\quad +\bigl\|e^{\tau\phi}\bigl[H_{A,V},\chi\bigr]f\bigr\|_{L^p(R_0<|x|<2R_0)}.
\end{aligned}
\]

Recall the commutator formula
\[
\bigl[H_{A,V},\rho(jx)\bigr]f
=2i j(\nabla\rho)(jx)\cdot\nabla_A f -j^2(\Delta\rho)(jx)f.
\]
Thus
\[
\begin{aligned}
&\bigl\|e^{\tau\phi}\bigl[H_{A,V},\rho(jx)\bigr]f\bigr\|_{L^p\left(\frac1{2j}<|x|<\frac1j\right)}\\
&\lesssim j\, e^{\frac{\tau}{4}(\log 2j)^2}\bigl\|\nabla_A f\bigr\|_{L^p(|x|<\frac1j)}
+j^2 e^{\frac{\tau}{4}(\log2j)^2}\|f\|_{L^p(|x|<\frac1j)}\\
&\lesssim j^2 e^{\frac{\tau}{4}(\log 2j)^2} e^{-\frac{N}{p}(\log j)^2}.
\end{aligned}
\]

For fixed $\tau$, pick $N>\frac{\tau p}{4}$. Then
\[
\lim_{j\to\infty} j^2 e^{\frac{\tau}{4}(\log 2j)^2} e^{-\frac{N}{p}(\log j)^2}=0.
\]
This yields
\[
\bigl\|e^{\tau\phi}\rho(jx)f\bigr\|_{L^q(B_{R_1})}
\lesssim j^2 e^{\frac{\tau}{4}(\log 2j)^2}e^{-\frac{N}{p}(\log j)^2}
+\bigl\|e^{\tau\phi}\bigl[H_{A,V},\chi\bigr]f\bigr\|_{L^p(R_0<|x|<2R_0)}.
\]

Set $\varphi_k=\bigl|e^{\tau\phi}\rho(kx)f\bigr|^q$. We have $\varphi_k\to |e^{\tau\phi}f|^q$ almost everywhere in $B_{R_1}$ as $k\to\infty$.
Applying Fatou's lemma,
\[
\begin{aligned}
\bigl\|e^{\tau\phi}f\bigr\|_{L^q(B_{R_1})}
&=\left(\int_{B_{R_1}} \bigl|e^{\tau\phi}f\bigr|^q dx\right)^{\frac1q}
=\left(\int_{B_{R_1}} \lim_{j\to\infty}\!\inf_{k\ge j}\varphi_k\,dx\right)^{\frac1q}\\
&\le \lim_{j\to\infty}\!\inf_{k\ge j}\left(\int_{B_{R_1}}\varphi_k\,dx\right)^{\frac1q}
=\lim_{j\to\infty}\!\inf_{k\ge j}
\bigl\|e^{\tau\phi}\rho(kx)f\bigr\|_{L^q(B_{R_1})}\\
&\le\lim_{j\to\infty}\!\inf_{k\ge j}
k^2 e^{\frac{\tau}{4}(\log 2k)^2}e^{-\frac{N}{p}(\log k)^2}
+\bigl\|e^{\tau\phi}\bigl[H_{A,V},\chi\bigr]f\bigr\|_{L^p(R_0<|x|<2R_0)}
\\
&=\bigl\|e^{\tau\phi}\bigl[H_{A,V},\chi\bigr]f\bigr\|_{L^p(R_0<|x|<2R_0)}.
\end{aligned}
\]

We have therefore proved
\[
\bigl\|e^{\tau\phi}f\bigr\|_{L^q(B_{R_1})}
\lesssim \bigl\|e^{\tau\phi}\bigl[H_{A,V},\chi\bigr]f\bigr\|_{L^p(R_0<|x|<2R_0)}.
\]
Equivalently,
\[
\|f\|_{L^q(B_{R_1})}
\lesssim e^{\tau(\phi(R_0)-\phi(R_1))}
\bigl\|\bigl[H_{A,V},\chi\bigr]f\bigr\|_{L^p(R_0<|x|<2R_0)}.
\]

It is straightforward to see that $\bigl[H_{A,V},\chi\bigr]f\in L^p(R_0<|x|<2R_0)$. Letting $\tau\to\infty$, we get $f=0$ a.e.\ in $B_{R_1}$.
 Applying Theorem \ref{prob:5.2}, we conclude $f=0$ a.e.\ in $\mathbb R^d$.

\end{proof}

\end{document}